\documentclass{article}
\pdfoutput=1

\interdisplaylinepenalty=2500

\usepackage[title]{appendix}
\usepackage[left=3cm,right=2cm,
    top=2cm,bottom=2cm]{geometry}
\pagestyle{myheadings}

\markright{S.L. Gefter, A.L. Piven'}

\usepackage{amsthm, amssymb,amsmath}
\usepackage{cite}
\usepackage{url}\usepackage{hyperref}
\usepackage{authblk}

\newtheorem{theorem}{Theorem}
\newtheorem{corollary}[theorem]{Corollary}
\newtheorem{proposition}[theorem]{Proposition}
\newtheorem{remark}[theorem]{Remark}
\newtheorem{lemma}[theorem]{Lemma}
\newtheorem{example}[theorem]{Example}
\newtheorem{definition}[theorem]{Definition}

\setcounter{section}{0}
\setcounter{equation}{0}
\setcounter{theorem}{0}

\numberwithin{theorem}{section}
\numberwithin{equation}{section}

\begin{document}

	\title{ Partial Differential Equations in Module of Copolynomials of Several Variables over a Commutative Ring.}

\author{S.L.Gefter \thanks{gefter@karazin.ua} }
\author{A.L.Piven'\thanks{aleksei.piven@karazin.ua} }
\affil{Department of Mathematics \& Computer Sciences\\
	 V. N. Karazin Kharkiv National University}

	\date{}
		\maketitle
This article is accepted for publishing in
Journal of Mathematical Physics, Analysis and Geometry		
		\begin{abstract}
			We study the copolynomials of $n$ variables, i.e. $K$-linear mappings from the ring of polynomials $K[x_1,...,x_n]$  into the commutative ring $K$.
 We prove an existence and uniqueness theorem for a linear differential equation of infinite order which can be considered as an algebraic version of the classical  Malgrange-Ehrenpreis theorem for the existence of the fundamental solution of a linear differential operator with constant coefficients. We find the fundamental solutions of  linear differential operators of infinite order and show that  the unique solution of the corresponding inhomogeneous equation  can be represented as a convolution of the fundamental solution of this operator and the right-hand side.
We also prove the existence and uniqueness theorem of the Cauchy problem for some linear differential equations in the module of formal power series with copolynomial coefficients.
			
			Keywords: copolynomial, fundamental solution, convolution, $\delta$-function, differential operator of infinite order, Cauchy problem,  Laplace transform\\
			\textit{2020 Mathematics Subject Classification.} 35R50,\  34A35,\  13B25

		\end{abstract}

\section{Introduction}

The Poisson formula $$u(t,x)=\frac{1}{2a\sqrt{\pi t}}\int\limits_{-\infty}^{\infty }  e^{-\frac{y^2}{4a^2t}}Q(x-y)dy$$ for the solution of the Cauchy problem
for the one-dimensional heat equation
$$\frac{\partial u(t,x)}{\partial t}=a^2\frac{\partial ^2 u(t,x)}{\partial x^2},\quad u(0,x)=Q(x)$$
is very interesting. Externally, this formula is ''very transcendental''.
In  other direction, if the initial condition $Q(x)$ is a polynomial of degree  $m$ with
integer coefficients and $a\in\mathbb{Z}$, then the considered Cauchy problem has the unique polynomial solution with integer coefficients
$$u(t,x)=\sum_{k=0}^{[m/2]}a^{2k}\frac{Q^{(2k)}(x)}{k!}t^k.$$
The form of this solution shows that it is defined over the ring $\mathbb{Z}$,
i.e. to find it we need only to add and to multiply
(unlike to the Poisson formula, where coefficient  $a^2$ be in a denominator of  an exponent of power). Thus, in some sense the Poisson formula has an arithmetic origin.
The analogue of the Poisson formula in the case of the non-invertible operator coefficient
$a^2$ and the convergent power series $Q(x)$ was considered in \cite{GV}.
In the present paper
we consider a purely algebraic version of the Poisson formula  (see Example \ref{ex31}) and other similar formulas in the case when the initial condition  $Q$ is a copolynomial over the commutative integral domain $K$, that is a $K$-linear functional on the ring  $K[x_1,...,x_n]$ of polynomials of $n$ variables. General properties of copolynomials of $n$ variables were considered in Section \ref{sec2}. Recently the case $n=1$ was studied in \cite{GP1,GP,HP} partially. In these papers copolynomials were called a formal generalized functions (see also \cite{GS}). Note that given properties of copolynomials connected with the convolution are  consequences of general constructions of the theory of Hopf algebras (see, for example, \cite{Kur,Und}). In Section \ref{sec3}  differential operators of infinite order on the module of copolynomials are studied.
In Section \ref{sec4} with the help of the  Laplace transform a
connection  between copolynomials and formal power
series is established  (see Propositions  \ref{prop61}, \ref{prop62} and
Theorem \ref{th41}).
The main results of the present paper are contained in Sections \ref{sec6} and \ref{sec7}. Theorem \ref{th2}  and Corollary  \ref{cor2} can be considered as an algebraic version of the classical  Malgrange-Ehrenpreis theorem for the existence of the fundamental solution of a linear differential operator with constant coefficients (see, for example,  \cite[Theorem 7.3.10]{Ho1}, \cite[Section 10.2]{Ho2})). Moreover,
it was shown in Theorem \ref{th2} and Corollary \ref{cor52} that  the unique solution of the inhomogeneous equation ${\mathcal F}u=T$ with a linear differential operator ${\mathcal F}$ of infinite order can be represented as a convolution of the fundamental solution of this operator and the right-hand side  $T$ from the module of copolynomials. We note that unlike the classical theory  (see, for example, \cite{Ho2}), the solution of the inhomogeneous equation ${\mathcal F}u=T$,
and in particular, its fundamental solution in the module of copolynomials
are defined uniquely.  In Section \ref{sec7}  the concept of a fundamental solution of the Cauchy problem for the equation $\frac{\partial u}{\partial t}={\mathcal F}u$
is introduced and is studied. The main result of this section is Theorem \ref{th5} which states that under the fulfillment of  additional restrictions on the ring $K$ the Cauchy problem
$\frac{\partial u}{\partial t}=({\mathcal F}u)(t,x),\ u(0,x)=Q(x)$ with a copolynomial $Q(x)$
has a unique solution and furthermore  this solution is a convolution of the fundamental solution of the Cauchy problem and the initial condition.
In Sections  \ref{sec6} and \ref{sec7} we present meaningful examples which illustrate the constructed theory.

Note that differential operators of infinite order on different spaces  were studied in numerous works (see, for example, \cite{Mor,kn,Dub,St1,St,Tkach,Gorod1,Gorod}).
In the classical scalar case, the series with respect to the derivatives of the
$\delta$-function are intensively studied because of their applications to differential and functional-differential
equations and the theory of orthogonal polynomials (see, for example, \cite{EF,EK}).	

\section{Preliminary}\label{sec2}

Let $K$ be an arbitrary commutative integral domain with identity
 and let $K[x_1,...,x_n]$ be a ring of polynomials with coefficients in $K$.

\begin{definition}\label{def1} By a copolynomial over the ring $K$ we mean a $K$-linear functional defined on the ring $K[x_1,...,x_n]$, i.e. a homomorphism from the module $K[x_1,...,x_n]$  into the ring $K$.
 \end{definition}

We denote the module of copolynomials over $K$ by $K[x_1,..,x_n]'$.
Thus  $T\in K[x_1,...,x_n]'$ if and only if $T:K[x_1,...,x_n]\to K$ and
$T$ has the property of $K$-linearity: $T(ap+bq)=aT(p)+bT(q)$
for all $p,q\in K[x_1,...,x_n]$ and $a,b\in K$.
 If $T\in K[x_1,..,x_n]'$ and $p\in K[x_1,...,x_n]$, the for the value of  $T$ on $p$ we use the notation  $(T,p)$.
 We also write a copolynomial $T\in K[x_1,...,x_n]'$ in the form $T(x)$,
 where $x=(x_1,...,x_n)$ is regarded as the argument of polynomials
 $p(x)\in K[x_1,...,x_n]$ subjected to the action of the  $K$-linear mapping $T$.
In this case, the result of action of $T$
upon $p$ can be represented in the form $(T(x),p(x))$.

Let $\mathbb{N}_0$ be the set of nonnegative integers. For a multi-index $\alpha=(\alpha_1,...,\alpha_n)\in\mathbb{N}_0^n$
we put \cite[Chapter 1, \S 1--2]{Mor}
$$D^{\alpha}=\frac{\partial^{|\alpha|}}{\partial x_1^{\alpha_1}\partial x_2^{\alpha_2}\cdots
\partial x_n^{\alpha_n}},\quad |\alpha|=\sum_{j=1}^n\alpha_j,$$
$$x^{\alpha}=x_1^{\alpha_1}x_2^{\alpha_2}\cdots x_n^{\alpha_n},\quad
\alpha !=\alpha_1 !\alpha_2!\cdots \alpha_n!.$$
For multi-indexes $\alpha, \beta\in\mathbb{N}_0^n$ the relation $\alpha\le \beta$ means that $\alpha_j\le \beta_j$ for all $j=1,...,n$. If $\alpha\le \beta$ , then we will use the notation $\left(\begin{array}{c}\beta\\ \alpha\end{array}\right)=
\prod\limits_{j=1}^n\left(\begin{array}{c}\beta _j\\ \alpha _j\end{array}\right)$.

Let
$p(x)=\sum\limits_{|\alpha|\le m}a_{\alpha}x^{\alpha}\in K[x_1,...,x_n]$.
If  $h=(h_1,...,h_n)$, then the polynomial $p(x+h)\in K[x_1,...,x_n][h_1,...,h_n]$
can be represented in the form
$$p(x+h)=\sum_{|\alpha|\le m}p_{\alpha}(x)h^{\alpha},$$ where  $p_{\alpha}(x)\in K[x_1,...,x_n]$. Since in the case of a field with zero characteristic
$p_{\alpha}(x)=\frac{D^{\alpha}p(x)}{\alpha !}$, we also assume that, by definition,
$\frac{D^{\alpha}p(x)}{\alpha !}=p_{\alpha}(x),\quad |\alpha|\le m$
is true for any commutative ring $K$. For $m<|\alpha |$ we assume that  $\frac{D^{\alpha}p(x)}{\alpha !}=0$.

We now introduce the notion of shift for a copolynomial \cite{GS,GP}.
For $T\in K[x_1,...,x_n]'$ and fixed $h=(h_1,...,h_n)\in K^n$
we define a copolynomial
$T(x+h)$ by the formula
$$(T(x+h),p)=(T,p(x-h)),\quad p\in K[x_1,...,x_n].$$
\begin{definition}\rm
The partial derivative  $\frac{\partial T}{\partial x_j}$ of a copolynomial  $T\in K[x_1,...,x_n]'$ with respect to the variable $x_j\ (j=1,...,n)$ is defined as in the classical case by the formula
 \begin{equation}\label{deriv}\left(\frac{\partial T}{\partial x_j},p\right)=-\left(T,\frac{\partial p}{\partial x_j}\right),\quad p\in K[x_1,...,x_n].\end{equation}
\end{definition}
By using \eqref{deriv}, we arrive at the following expression for the  derivative
 $D^{\alpha }T$:
$$(D^{\alpha }T,p)=(-1)^{|\alpha|}(T,D^{\alpha}p),\quad p\in K[x_1,...,x_n].$$

Therefore
$$
(D^{\alpha}T,p)=0,\ \text{where} \ p\in K[x_1,...,x_n]\ \text{and}\ |\alpha|>{\rm deg} p.$$

By virtue of the equality
\begin{equation}\label{dn}
\left(\frac{D^{\alpha}T}{\alpha !},p\right)=(-1)^{|\alpha|}\left(T,\frac{D^{\alpha}p}{\alpha !}\right),\quad p\in K[x_1,...,x_n],
\end{equation}
the copolynomials $\frac{D^{\alpha}T}{\alpha !}$
are well defined for any  $T\in K[x_1,...,x_n]'$ and $\alpha\in\mathbb{N}_0^n$.

\begin{example}\rm The copolynomial $\delta$-function
is given by the formula
$$(\delta,p)=p(0),\quad p\in K[x_1,...,x_n].$$
Therefore
$$(D^{\alpha}\delta,p)=(-1)^{|\alpha|}(\delta,D^{\alpha}p)=(-1)^{|\alpha|}D^{\alpha }p(0),\quad \alpha\in\mathbb{N}_0^n.$$
\end{example}

\begin{example}\label{ex22}\rm
Let  $K=\mathbb {R}$  and let $f:\mathbb{R}^n\rightarrow \mathbb{R}$ be a Lebesgue-integrable function such that
\begin{equation}\label{reg1}
\int\limits_{\mathbb{R}^n} |x^{\alpha}f(x)|dx
< +\infty,\quad \alpha\in \mathbb{N}_0^n
\end{equation}
Then  $f$ generates the regular copolynomial $T_f$:
$$(T_f,p)=\int\limits_{\mathbb{R}^n} p(x)f(x)
dx,\quad p\in \mathbb{R}[x_1,...,x_n].$$
Note that, in this case, unlike the classical theory, all copolynomials are regular \cite[Theorem 7.3.4]{EK}, although a
nonzero function $f$ can generate the zero copolynomial \cite[Example 2.2]{GP}, \cite[Remark 1]{GS}.
Moreover, if $f\in C^1(\mathbb{R}^n)$ and the conditions \eqref{reg1} are satisfied
for $\frac{\partial f}{\partial x_j}\ (j=1,...,n)$, then it can be shown
that
$$\left(\frac{\partial T_f}{\partial x_j},p\right)=\int\limits_{\mathbb{R}^n} p(x)\frac{\partial f}{\partial x_j}
dx,\quad p\in \mathbb{R}[x_1,...,x_n],\ j=1,...,n.$$
\end{example}

\begin{remark}\rm
The notion of a copolynomial differs from that of
a formal distribution used in the theory of vertex operator algebras (see \cite{v1,v2}),
although there are some natural connections between these notions.
\end{remark}

We now consider the problem of convergence in the space $K[x_1,...,x_n]'$.
In the ring K, we consider the discrete
topology. Further, in the module of copolynomials $K[x_1,...,x_n]'$, we consider the topology of pointwise convergence. It is easy to show that the last topology is generated by the following metric:
$$d(T_1,T_2)=\sum_{|\alpha|= 0}^{\infty }\frac{d_0((T_1,x^{\alpha}),(T_2,x^{\alpha}))}{2^{|\alpha|}},$$
where $d_0$ is the discrete metric on $K$.
 The convergence of a sequence $\{T_k\}_{k=1}^{\infty}$ to $T$ in $K[x_1,...,x_n]'$ means that for every polynomial $p\in K[x_1,...,x_n]$ there exists a number $k_0\in\mathbb{N}$ such that
$$(T_k,p)=(T,p),\quad k=k_0,k_0+1,k_0+2,....$$
The series $\sum\limits_{k=0}^{\infty}T_k$ converges in $K[x_1,...,x_n]'$ if a sequence of its partial sums $\sum\limits_{k=0}^{N}T_k$ converges in $K[x_1,...,x_n]'$.

The following lemma shows the possibility of the decomposition of an arbitrary copolynomial in series about the system
$\frac{D^{\alpha}\delta}{\alpha!},\ \alpha\in\mathbb{N}_0^n$.

\begin{lemma}\label{lem21} Let $T\in K[x_1,...,x_n]'$. Then
$$
T=\sum_{|\alpha|=0}^{\infty}(-1)^{|\alpha|}(T,x^{\alpha})\frac{D^{\alpha}\delta}{\alpha!}.$$
\end{lemma}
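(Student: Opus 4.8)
The plan is to test the proposed series against an arbitrary fixed polynomial $p$ and to exploit the fact that, in the topology of pointwise convergence described above, convergence of $\sum_k T_k$ to $T$ only requires that the partial sums evaluated at $p$ eventually stabilize at the value $(T,p)$. Since every $p\in K[x_1,\dots,x_n]$ has finite degree, I expect the series, when evaluated at $p$, to terminate after finitely many terms, so that no genuine limiting argument is needed.

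First I would compute the action of a single term on $p$. Writing $p(x)=\sum_{|\beta|\le m}a_\beta x^\beta$, the definition $p_\alpha(x)=\frac{D^\alpha p(x)}{\alpha!}$ together with the expansion $p(x+h)=\sum_\alpha p_\alpha(x)h^\alpha$ shows, upon setting $x=0$, that $\frac{D^\alpha p(0)}{\alpha!}=a_\alpha$ is exactly the coefficient of $x^\alpha$ in $p$ (with $a_\alpha=0$ whenever $|\alpha|>m$). Using \eqref{dn} and the defining property $(\delta,q)=q(0)$, this gives
$$\left(\frac{D^\alpha\delta}{\alpha!},p\right)=(-1)^{|\alpha|}\left(\delta,\frac{D^\alpha p}{\alpha!}\right)=(-1)^{|\alpha|}a_\alpha.$$

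Next I would evaluate the general term of the series. Multiplying by the scalar $(-1)^{|\alpha|}(T,x^\alpha)$ and noting that the two sign factors cancel, the $\alpha$-th term acts on $p$ simply as $(T,x^\alpha)\,a_\alpha$. Hence the partial sum $S_N=\sum_{|\alpha|\le N}(-1)^{|\alpha|}(T,x^\alpha)\frac{D^\alpha\delta}{\alpha!}$ satisfies $(S_N,p)=\sum_{|\alpha|\le N}a_\alpha(T,x^\alpha)$. For every $N\ge m=\deg p$ the remaining coefficients $a_\alpha$ vanish, so the $K$-linearity of $T$ yields $(S_N,p)=\sum_{|\alpha|\le m}a_\alpha(T,x^\alpha)=\left(T,\sum_{|\alpha|\le m}a_\alpha x^\alpha\right)=(T,p)$.

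Since $p$ was arbitrary and $(S_N,p)=(T,p)$ holds for all $N\ge\deg p$, the partial sums converge pointwise to $T$, which is precisely the required convergence in $K[x_1,\dots,x_n]'$, establishing the claimed decomposition. The only points requiring care are the identification $\frac{D^\alpha p(0)}{\alpha!}=a_\alpha$ and the bookkeeping of the two sign factors; beyond that I anticipate no real obstacle, because testing against a fixed polynomial automatically truncates the series to a finite sum.
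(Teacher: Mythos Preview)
Your argument is correct and follows essentially the same route as the paper: the paper simply tests the series on the monomials $x^{\beta}$ (where only the term $\alpha=\beta$ survives), whereas you test on a general $p=\sum a_\beta x^\beta$ and then use $K$-linearity, which amounts to the same computation. Your handling of the sign cancellation and of the stabilization of the partial sums is accurate.
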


\begin{proof}
For any multi-index $\beta\in\mathbb{N}_0^n$ we have
$$\sum_{|\alpha|=0}^{\infty}(-1)^{|\alpha|}(T,x^{\alpha})\left(\frac{D^{\alpha}\delta}{\alpha!},x^{\beta}\right)=
(-1)^{|\beta|}(T,x^{\beta})\left(\frac{D^{\beta}\delta}{\beta!},x^{\beta}\right)=(T,x^{\beta}).$$
\end{proof}

\begin{definition}\label{defconv}\rm If $p\in K[x_1,...,x_n]$ and $T\in K[x_1,...,x_n]'$ is a copolynomial, then their \textit{convolution} $T*p$ is
defined naturally as follows:
$$(T*p)(x)=(T(y),p(x-y))=\sum\limits_{|\alpha|\le m}(-1)^{|\alpha|}(T,y^{\alpha})\frac{D^{\alpha}p(x)}{\alpha!},$$
where $m={\rm deg}p$. Thus $T*p$ is a polynomial with coefficients in $K$, i.e. $T*p\in K[x_1,...,x_n]$.
\end{definition}

\begin{remark}\label{rem2}\rm
By Definition \ref{defconv} we have $\delta*p=p$.
\end{remark}

\begin{definition}\label{deftenz}\rm The \textit{tensor product} $T_1\otimes T_2\in K[x_1,...,x_n,y_1,...,y_m]'$ of copolynomials
$T_1\in K[x_1,....,x_n]'$ and $T_2\in K[y_1,....,y_m]'$ is defined by the equality:
$$(T_1\otimes T_2,x^{\alpha}y^{\beta})=(T_1,x^{\alpha})(T_2,y^{\beta}),\quad
\alpha\in\mathbb{N}_0^n,\beta\in\mathbb{N}_0^m.$$
Further with the help of $K$-linearity the result of the action $(T_1\otimes T_2,p)$ of the copolynomial
$T_1\otimes T_2$ to an arbitrary polynomial $p\in K[x_1,...,x_n,y_1,...,y_m]$ is defined.
\end{definition}

\begin{definition}\label{defconv1}\rm Let $T_1,T_2\in K[x_1,...,x_n]'$. We remind the definition of their
 \textit{convolution} \cite{Kur}, \cite[Section 2.1]{Und}. If $p\in K[x_1,...,x_n]$ and
$p(x+y)=\sum\limits_{|\alpha|\le m}\frac{D^{\alpha}p(x)}{\alpha!}y^{\alpha}$,
then
\begin{equation}\label{conv}
(T_1*T_2,p)=(T_1\otimes T_2,p(x+y))=\sum_{|\alpha| \le m}\left(T_1(x),\frac{D^{\alpha}p(x)}{\alpha!}\right)(T_2(y),y^{\alpha}).\end{equation}
Note that $T_1*T_2\in K[x_1,...,x_n]'$.
\end{definition}
The following assertion  establishes  the commutativity and associativity for a convolution of copolynomials.
\begin{proposition}
Let $T_1,T_2,T_3\in K[x_1,...,x_n]'$. Then
$$T_1*T_2=T_2*T_1$$
$$(T_1*T_2)*T_3=T_1*(T_2*T_3).$$
\end{proposition}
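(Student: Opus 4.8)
The plan is to reduce both identities to a computation on monomials and then read them off from a single symmetric expression. Since $T_1*T_2$, $(T_1*T_2)*T_3$, etc. are again copolynomials, and a copolynomial is completely determined by its values on the monomials $x^\gamma$, $\gamma\in\mathbb{N}_0^n$ (this is exactly the content of the topology of pointwise convergence and of Lemma \ref{lem21}), it suffices to evaluate each side on an arbitrary monomial $p=x^\gamma$. The only computational input I need is the multinomial expansion $(x+y)^\gamma=\sum_{\alpha\le\gamma}\binom{\gamma}{\alpha}x^{\gamma-\alpha}y^\alpha$, which, substituted into \eqref{conv} together with $\frac{D^\alpha x^\gamma}{\alpha!}=\binom{\gamma}{\alpha}x^{\gamma-\alpha}$ (and $=0$ unless $\alpha\le\gamma$), produces the explicit formula
$$(T_1*T_2,x^\gamma)=\sum_{\alpha\le\gamma}\binom{\gamma}{\alpha}(T_1,x^{\gamma-\alpha})(T_2,x^\alpha).$$

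For commutativity I would argue directly from this formula: re-indexing the sum by $\alpha\mapsto\gamma-\alpha$, using the symmetry $\binom{\gamma}{\alpha}=\binom{\gamma}{\gamma-\alpha}$ and the commutativity of the products $(T_1,x^{\gamma-\alpha})(T_2,x^\alpha)$ in $K$, turns $(T_1*T_2,x^\gamma)$ into $(T_2*T_1,x^\gamma)$. Conceptually this is just the observation that $(T_1*T_2,p)=(T_1\otimes T_2,p(x+y))$, that the polynomial $p(x+y)$ is invariant under interchanging the two variable blocks $x$ and $y$, and that interchanging the blocks converts $T_1\otimes T_2$ into $T_2\otimes T_1$ (Definition \ref{deftenz}); commutativity of $K$ then finishes the argument.

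For associativity the plan is to introduce the triple tensor product $T_1\otimes T_2\otimes T_3$, characterized by $(T_1\otimes T_2\otimes T_3,x^\alpha y^\beta z^\epsilon)=(T_1,x^\alpha)(T_2,y^\beta)(T_3,z^\epsilon)$ in analogy with Definition \ref{deftenz}, and to show that \emph{both} $(T_1*T_2)*T_3$ and $T_1*(T_2*T_3)$ agree with the copolynomial $p\mapsto(T_1\otimes T_2\otimes T_3,p(x+y+z))$. On a monomial $x^\gamma$ the latter equals $\sum_{\alpha+\beta+\epsilon=\gamma}\frac{\gamma!}{\alpha!\,\beta!\,\epsilon!}(T_1,x^\alpha)(T_2,x^\beta)(T_3,x^\epsilon)$, a sum that is manifestly symmetric in the three copolynomials. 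Expanding $S=T_1*T_2$ inside $(S*T_3,x^\gamma)$ by the explicit formula above gives a nested sum over $\mu\le\gamma$ and $\nu\le\gamma-\mu$ with coefficient $\binom{\gamma}{\mu}\binom{\gamma-\mu}{\nu}$; the substitution $\epsilon=\mu$, $\beta=\nu$, $\alpha=\gamma-\mu-\nu$ and the componentwise identity $\binom{\gamma}{\mu}\binom{\gamma-\mu}{\nu}=\frac{\gamma!}{\alpha!\,\beta!\,\epsilon!}$ collapse it to the symmetric triple sum. The identical substitution applied to $T_1*(T_2*T_3)$ yields the same expression, and associativity follows.

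The hard part is purely combinatorial: carefully unravelling the nested convolution into a double multi-index sum and verifying the coefficient identity $\binom{\gamma}{\mu}\binom{\gamma-\mu}{\nu}=\frac{\gamma!}{\alpha!\,\beta!\,\epsilon!}$ componentwise. No convergence issues arise, since every sum is finite (all terms with $\alpha\not\le\gamma$ vanish because $D^\alpha p=0$ for $|\alpha|>\deg p$). I expect that the cleanest writeup is in fact the coalgebraic one hinted at by the references \cite{Kur,Und}: the assignment $\Delta p=p(x+y)$ makes $K[x_1,\dots,x_n]$ a cocommutative, coassociative coalgebra, and convolution of copolynomials is precisely the multiplication dual to $\Delta$, so commutativity and associativity are the duals of cocommutativity and coassociativity of $\Delta$; the multi-index computation above is a self-contained verification of exactly these co-properties.
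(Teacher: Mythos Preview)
The paper does not actually supply a proof of this proposition: it is stated and then the text moves directly to Example~\ref{ex2}, relying on the remark in the introduction to Section~\ref{sec2} that ``given properties of copolynomials connected with the convolution are consequences of general constructions of the theory of Hopf algebras'' (with references to \cite{Kur,Und}). Your proposal is therefore not competing against an explicit argument in the paper but rather filling in what the authors leave to the literature.

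Your argument is correct. Reducing to monomials is legitimate since a copolynomial is determined by its values on $x^\gamma$; the explicit formula $(T_1*T_2,x^\gamma)=\sum_{\alpha\le\gamma}\binom{\gamma}{\alpha}(T_1,x^{\gamma-\alpha})(T_2,x^\alpha)$ follows from \eqref{conv} with $\frac{D^\alpha x^\gamma}{\alpha!}=\binom{\gamma}{\alpha}x^{\gamma-\alpha}$; the re-indexing $\alpha\mapsto\gamma-\alpha$ gives commutativity; and the nested expansion together with the componentwise identity $\binom{\gamma}{\mu}\binom{\gamma-\mu}{\nu}=\gamma!/(\alpha!\beta!\epsilon!)$ yields the symmetric triple sum, hence associativity. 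Your closing remark that this is exactly the dual of the cocommutativity and coassociativity of the coproduct $\Delta p=p(x+y)$ on $K[x_1,\dots,x_n]$ is precisely the Hopf-algebraic viewpoint the paper invokes, so in spirit your approach coincides with the one the authors have in mind; you have simply written out the verification that they delegate to \cite{Kur,Und}.
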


\begin{example}\label{ex2}\rm Let $T\in K[x_1,...,x_n]'$. We find the convolution $\delta*T$. For $p\in K[x_1,...,x_n]$, ${\rm deg }p=m$ we obtain
$$(\delta*T,p)=\sum_{|\alpha|\le m}\left(\delta,\frac{D^{\alpha}p(x)}{\alpha!}\right)(T,y^{\alpha})
=\sum_{|\alpha|\le m}\left(\frac{D^{\alpha}p}{\alpha!}\right)(0)(T,y^{\alpha})=(T,p).$$
Hence $\delta*T=T$.
\end{example}

\begin{corollary}\label{cor5}
 The module $K[x_1,...,x_n]'$ under the convolution operation is an associative commutative  algebra with identity  over the ring $K$.
\end{corollary}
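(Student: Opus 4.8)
The plan is to verify the four structural properties that constitute an associative commutative $K$-algebra with identity, recognizing that most of them have already been established in the preceding material; the corollary is essentially an assembly of these facts.

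First I would recall that $K[x_1,\dots,x_n]'$ is already a $K$-module and that, by Definition \ref{defconv1}, convolution is a well-defined operation $*$ taking values again in $K[x_1,\dots,x_n]'$, so it only remains to check that $*$ endows the module with a compatible ring structure. I would verify $K$-bilinearity directly from formula \eqref{conv}: since $(T_1*T_2,p)=\sum_{|\alpha|\le m}\left(T_1(x),\frac{D^{\alpha}p(x)}{\alpha!}\right)(T_2(y),y^{\alpha})$ is a finite sum of products of the pairings $(T_1,\cdot)$ and $(T_2,\cdot)$, and each such pairing is $K$-linear in its copolynomial argument, the expression is additive and $K$-homogeneous separately in $T_1$ and in $T_2$. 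The compatibility $a(T_1*T_2)=(aT_1)*T_2=T_1*(aT_2)$ for $a\in K$ then follows because $K$ is commutative, so a scalar may be pulled through either factor.

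Next, the commutativity $T_1*T_2=T_2*T_1$ and the associativity $(T_1*T_2)*T_3=T_1*(T_2*T_3)$ are exactly the content of the Proposition immediately preceding this corollary, so I would simply invoke it. For the identity element I would take $\delta$: Example \ref{ex2} shows $\delta*T=T$ for every $T\in K[x_1,\dots,x_n]'$, and combining this with the already-established commutativity gives $T*\delta=\delta*T=T$, so $\delta$ is a two-sided identity for the convolution. Assembling bilinearity, associativity, commutativity, and the identity then completes the verification.

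I expect no serious obstacle here, since the corollary packages together results already proved (the preceding Proposition and Example \ref{ex2}); the only item not literally stated earlier is the $K$-bilinearity and scalar compatibility of $*$, and that reads off immediately from \eqref{conv}.
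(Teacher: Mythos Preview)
Your proposal is correct and mirrors exactly what the paper intends: the corollary is stated without proof because it is an immediate consequence of the preceding Proposition (commutativity and associativity) together with Example~\ref{ex2} ($\delta$ as identity), and you have simply written out this assembly explicitly, adding the routine check of $K$-bilinearity that the paper leaves implicit.
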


The following theorem establishes the property of the continuity for a convolution.

\begin{theorem}\label{th21}
Let $T_k\in K[x_1,...,x_n]',\ k\in\mathbb{N}$ and $T_k\to 0,\ k\to\infty$ in the topology of $K[x_1,...,x_n]'$. Then $T_k*S\to 0,\quad k\to\infty$ in the topology of $K[x_1,...,x_n]'$ for every copolynomial  $S\in K[x_1,...,x_n]'$.
\end{theorem}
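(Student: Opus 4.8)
The plan is to unwind both the definition of convergence in $K[x_1,\dots,x_n]'$ and the convolution formula \eqref{conv}, and then exploit the fact that convolving against a fixed polynomial only probes the sequence $\{T_k\}$ on finitely many test polynomials. Recall that $T_k\to 0$ means: for every $q\in K[x_1,\dots,x_n]$ there is an index $k_0$ with $(T_k,q)=0$ for all $k\ge k_0$. To prove $T_k*S\to 0$ I must establish the analogous statement for $T_k*S$, namely that for each fixed $p\in K[x_1,\dots,x_n]$ the values $(T_k*S,p)$ vanish for all sufficiently large $k$.

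First I would fix an arbitrary $p\in K[x_1,\dots,x_n]$ and set $m=\deg p$. Applying \eqref{conv} with $T_1=T_k$ and $T_2=S$ gives
\begin{equation*}
(T_k*S,p)=\sum_{|\alpha|\le m}\left(T_k(x),\frac{D^{\alpha}p(x)}{\alpha!}\right)(S(y),y^{\alpha}).
\end{equation*}
The decisive observation is that this is a \emph{finite} sum: the index $\alpha$ ranges over the finite set $\{\alpha\in\mathbb{N}_0^n:|\alpha|\le m\}$, and each polynomial $q_\alpha:=\frac{D^{\alpha}p}{\alpha!}$ lies in $K[x_1,\dots,x_n]$. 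The copolynomial $S$ contributes only the fixed scalars $(S(y),y^{\alpha})$, which play no role in the limit.

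Next, since $T_k\to 0$, for each of these finitely many polynomials $q_\alpha$ there is an index $k_\alpha\in\mathbb{N}$ such that $(T_k,q_\alpha)=0$ whenever $k\ge k_\alpha$. Setting $k_0=\max_{|\alpha|\le m}k_\alpha$ (a maximum over a finite set), every factor $\left(T_k(x),\tfrac{D^{\alpha}p(x)}{\alpha!}\right)$ vanishes for $k\ge k_0$, and hence $(T_k*S,p)=0$ for all $k\ge k_0$. As $p$ was arbitrary, this is precisely the assertion that $T_k*S\to 0$.

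There is no serious obstacle here; the whole argument rests on the single structural point that $\deg p<\infty$ forces the pairing \eqref{conv} to be a finite sum, so that eventual vanishing on each individual derivative $\frac{D^{\alpha}p}{\alpha!}$ can be amalgamated by taking a maximum over finitely many indices. The only point requiring a little care is that the topology is that of pointwise convergence with the \emph{discrete} topology on $K$, so ``convergence to $0$'' genuinely means eventual equality to $0$ rather than mere smallness---which is exactly what makes the finite maximum do its job.
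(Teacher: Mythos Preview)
Your proof is correct and follows essentially the same approach as the paper: apply the convolution formula \eqref{conv} to a fixed polynomial $p$ of degree $m$, note that the resulting sum is finite, and conclude that each term (and hence the sum) vanishes for all sufficiently large $k$. The only cosmetic difference is that the paper writes the convolution with the roles swapped, pairing $T_k$ against the monomials $y^{\alpha}$ rather than against the derivatives $\frac{D^{\alpha}p}{\alpha!}$, but this is equivalent by commutativity of the convolution and the argument is otherwise identical.
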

\begin{proof}
Indeed, by Formula \eqref{conv},  for every polynomial  $p\in K[x_1,...,x_n]$ of degree $m$ we have
$$(T_k*S,p)=\sum_{|\alpha |\le m}(T_k(y),y^{\alpha})\left(S(x),\frac{D^{\alpha}p(x)}{\alpha!}\right)\to 0,\ k\to \infty$$
\end{proof}
\begin{corollary}\label{cor1}
 Assume that $T_k\in K[x_1,...,x_n]',\ k\in\mathbb{N}$ and the series $\sum\limits_{k=1}^{\infty}T_k$ converges in the topology of $K[x_1,...,x_n]'$. Then for every copolynomial $S\in K[x_1,...,x_n]'$ the series $\sum\limits_{k=1}^{\infty}(T_k*S)$ converges in the same topology and
$$\sum_{k=1}^{\infty}(T_k*S)=\left(\sum_{k=1}^{\infty}T_k\right)*S.$$
\end{corollary}

\section{Linear differential operators of infinite order on the module of copolynomials}\label{sec3}

We now consider the following linear differential operator of infinite order on $K[x_1,...,x_n]'$:
$$
{\mathcal F}=\sum\limits_{|\alpha|=0}^{\infty}a_{\alpha}D^{\alpha},$$
where $a_{\alpha}\in K$.
This operator acts upon a copolynomial $T\in K[x_1,...,x_n]'$ by the following rule: if
$p\in K[x_1,...,x_n]$ and $m={\rm deg} p$, then
$$({\mathcal F}T,p)=\left(\sum\limits_{|\alpha|=0}^{\infty}a_{\alpha}D^{\alpha}T,p\right)=
\sum_{|\alpha|\le m}(-1)^{|\alpha|}a_{\alpha}(T,D^{\alpha}p)=
\sum_{|\alpha|\le m}a_{\alpha}(D^{\alpha }T,p).$$

Thus, the differential operator ${\mathcal F}:K[x_1,...,x_n]'\to K[x_1,...,x_n]'$ is well defined and for any polynomial  $p$ of degree at most $m$ the equality
\begin{equation}\label{FT}
({\mathcal F}T,p)=\sum_{|\alpha|\le m}a_{\alpha}(D^{\alpha }T,p).
\end{equation}
is true.

\begin{lemma}\label{lem}
The differential operator ${\mathcal F}:K[x_1,...,x_n]'\to K[x_1,...,x_n]'$ is a continuous $K$-linear mapping.
\end{lemma}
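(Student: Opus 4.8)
The plan is to establish the two claimed properties—linearity in the copolynomial argument and continuity—separately, both of which reduce to the single crucial observation that, although $\mathcal{F}$ is formally of infinite order, its action on any fixed polynomial argument involves only finitely many terms. Indeed, by formula \eqref{FT}, for a polynomial $p$ with $\deg p = m$ the value $(\mathcal{F}T, p)$ is the \emph{finite} sum $\sum_{|\alpha| \le m} a_{\alpha}(D^{\alpha} T, p)$, because $(D^{\alpha} T, p) = 0$ whenever $|\alpha| > \deg p$, as already recorded in Section \ref{sec2}. That $\mathcal{F}T$ is again a copolynomial is the content of the ``well defined'' assertion preceding the lemma, so I would take that for granted and concentrate on the two remaining points.

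To verify $K$-linearity, I would first fix an arbitrary polynomial $p$ of degree $m$, copolynomials $T, S \in K[x_1,\ldots,x_n]'$, and scalars $a, b \in K$. Using the $K$-linearity of each derivative $D^{\alpha}$ in its copolynomial argument, i.e. $(D^{\alpha}(aT+bS), p) = a(D^{\alpha} T, p) + b(D^{\alpha} S, p)$, together with the fact that the sum over $|\alpha| \le m$ is finite, I would extract the scalars from the sum to obtain $(\mathcal{F}(aT+bS), p) = a(\mathcal{F}T, p) + b(\mathcal{F}S, p)$. Since $p$ was arbitrary, this gives $\mathcal{F}(aT+bS) = a\,\mathcal{F}T + b\,\mathcal{F}S$.

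For continuity, I would exploit that the topology on $K[x_1,\ldots,x_n]'$ is the metrizable topology of pointwise convergence described above, so that continuity of the $K$-linear map $\mathcal{F}$ is equivalent to sequential continuity at the origin; hence it suffices to show that $T_k \to 0$ implies $\mathcal{F}T_k \to 0$. Taking such a sequence and fixing a polynomial $p$ with $\deg p = m$, I would rewrite $(\mathcal{F}T_k, p) = \sum_{|\alpha| \le m} (-1)^{|\alpha|} a_{\alpha}(T_k, D^{\alpha} p)$ and note that $\{D^{\alpha} p : |\alpha| \le m\}$ is a finite family of polynomials. By the definition of convergence, for each such $\alpha$ there is an index $k_{\alpha}$ with $(T_k, D^{\alpha} p) = 0$ for all $k \ge k_{\alpha}$; setting $k_0 = \max_{|\alpha| \le m} k_{\alpha}$, a maximum over finitely many indices, forces $(\mathcal{F}T_k, p) = 0$ for all $k \ge k_0$. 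As $p$ was arbitrary, $\mathcal{F}T_k \to 0$.

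I do not expect a genuine obstacle here: the argument is routine once the locally finite character of the action is isolated. The only point requiring care—and precisely the reason the infinite-order operator behaves as tamely as a finite-order one—is that \eqref{FT} truncates the formally infinite series to a finite sum for each fixed argument, which is exactly what permits extracting scalars from a finite sum and choosing a single $k_0$ as a finite maximum. I would therefore make sure to invoke the vanishing $(D^{\alpha} T, p) = 0$ for $|\alpha| > \deg p$ explicitly at the start, so that the finiteness is transparent throughout.
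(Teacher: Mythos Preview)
Your proposal is correct and follows essentially the same approach as the paper: both rely on the truncation \eqref{FT} to a finite sum over $|\alpha|\le m$ and then take a finite maximum of stabilization indices over the polynomials $D^{\alpha}p$. The only cosmetic difference is that you first establish $K$-linearity and then reduce continuity to sequential continuity at the origin, whereas the paper proves $\mathcal{F}T_k\to\mathcal{F}T$ directly for an arbitrary limit $T$; the underlying argument is the same.
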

\begin{proof}
Assume that a sequence of copolynomials $\{T_k\}_{k=0}^{\infty}$ converges to $T$ in $K[x_1,...,x_n]'$.
Then
there exists $k_0=k_0(p)\in\mathbb{N}$ such that the equality
$(T_k,p)=(T,p)$ is true for all $k\ge k_0(p)$.  Let $m={\rm deg }p$
and $s(p)=\max\{k_0(D^{\alpha}p):|\alpha|\le m\}$.
Then, by using \eqref{FT}, we obtain
$$({\mathcal F}T_k,p)=\left(\sum_{|\alpha|\le m}a_{\alpha}D^{\alpha }T_k,p\right)=
\sum_{|\alpha|\le m}a_{\alpha}(-1)^{\alpha}(T_k,D^{\alpha}p)$$ $$=\sum_{|\alpha|\le m}a_{\alpha}(-1)^{\alpha}(T,D^{\alpha}p)
=\left(\sum_{|\alpha|\le m}a_{\alpha}D^{\alpha}T,p\right)=({\mathcal F}T,p),\quad k\ge s(p),$$
i.e., the sequence $\{{\mathcal F}T_{k}\}_{k=0}^{\infty}$ converges to ${\mathcal F}T$. The lemma is proved.
\end{proof}

The following assertion shows that the convolution operation and the differential operator of infinite order
commute. We also show that every such differential operator is a convolution operator.
\begin{theorem}\label{prop1}
Let  $T_1,T_2\in  K[x_1,...,x_n]'$ and let ${\mathcal F}=\sum\limits_{|\alpha|=0}^{\infty}a_{\alpha}D^{\alpha}$ be a differential operator
of infinite order on $K[x_1,...,x_n]'$ with coefficients $a_{\alpha}\in K$.  Then
$$
{\mathcal F}(T_1*T_2)=({\mathcal F}T_1)*T_2.$$
Therefore ${\mathcal F}(T)={\mathcal F}(\delta)*T$ for all $T\in K[x_1,...,x_n]'$.
\end{theorem}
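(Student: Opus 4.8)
The plan is to prove the displayed identity $\mathcal{F}(T_1*T_2)=(\mathcal{F}T_1)*T_2$ by evaluating both copolynomials on an arbitrary test polynomial $p\in K[x_1,\ldots,x_n]$ of degree $m$ and checking that the two resulting elements of $K$ agree. Once this is done, the final assertion follows immediately by taking $T_1=\delta$ and $T_2=T$: by Example \ref{ex2} we have $\delta*T=T$, so the left-hand side becomes $\mathcal{F}(T)$ and the right-hand side becomes $(\mathcal{F}\delta)*T$, which is exactly $\mathcal{F}(T)=\mathcal{F}(\delta)*T$.

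For the left-hand side I would first expand $\mathcal{F}$ via \eqref{FT} applied to the copolynomial $S=T_1*T_2$, obtaining $(\mathcal{F}(T_1*T_2),p)=\sum_{|\beta|\le m}a_\beta(-1)^{|\beta|}(T_1*T_2,D^\beta p)$, and then apply the convolution formula \eqref{conv} to each inner pairing $(T_1*T_2,D^\beta p)$. This yields a double sum whose generic summand is $a_\beta(-1)^{|\beta|}\bigl(T_1,\frac{D^\gamma(D^\beta p)}{\gamma!}\bigr)(T_2,y^\gamma)$. For the right-hand side I would instead expand the convolution \eqref{conv} first, with $U=\mathcal{F}T_1$ in the first slot, getting $\sum_\gamma(\mathcal{F}T_1,\frac{D^\gamma p}{\gamma!})(T_2,y^\gamma)$, and then expand each $(\mathcal{F}T_1,\frac{D^\gamma p}{\gamma!})$ by \eqref{FT}. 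This produces a double sum with generic summand $a_\beta(-1)^{|\beta|}\bigl(T_1,D^\beta(\frac{D^\gamma p}{\gamma!})\bigr)(T_2,y^\gamma)$.

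The two double sums have identical summands once I establish the key identity $D^\beta\bigl(\frac{D^\gamma p}{\gamma!}\bigr)=\frac{D^\gamma(D^\beta p)}{\gamma!}$, i.e. that the ordinary derivative $D^\beta$ commutes with the divided derivative $\frac{D^\gamma}{\gamma!}$ of Section \ref{sec2}. I would prove this by applying $D^\beta_x$ to the Taylor expansion $p(x+y)=\sum_\gamma\frac{D^\gamma p(x)}{\gamma!}\,y^\gamma$ and comparing coefficients of $y^\gamma$: the left side gives $\sum_\gamma D^\beta\bigl(\frac{D^\gamma p(x)}{\gamma!}\bigr)y^\gamma$, whereas $D^\beta_x[p(x+y)]=(D^\beta p)(x+y)=\sum_\gamma\frac{D^\gamma(D^\beta p)(x)}{\gamma!}\,y^\gamma$. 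It then remains to check that the two sums range over the same index set $\{(\beta,\gamma):|\beta|+|\gamma|\le m\}$: on the left because $D^\beta p$ has degree $\le m-|\beta|$ forces $|\gamma|\le m-|\beta|$, and on the right because $\frac{D^\gamma p}{\gamma!}$ has degree $\le m-|\gamma|$ forces $|\beta|\le m-|\gamma|$. After this the two expressions coincide term by term.

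The hard part will be the careful bookkeeping over a general commutative ring $K$: since $\gamma!$ need not be invertible, one cannot literally cancel factorials, and each symbol $\frac{D^\gamma p}{\gamma!}$ must be read as the Taylor coefficient $p_\gamma$ defined in Section \ref{sec2}. The commutation identity above is precisely what makes the argument valid in arbitrary characteristic, and deriving it from the Taylor expansion (rather than through binomial manipulations involving $\frac{(\beta+\gamma)!}{\gamma!}$) is the cleanest route. I would also note that all the sums involved are finite—only multi-indices with $|\beta|\le m$ survive by \eqref{FT}—so the $K$-linearity and continuity of $\mathcal{F}$ from Lemma \ref{lem} fully justify the reorderings and no infinite tail contributes.
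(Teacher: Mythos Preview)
Your proof is correct and rests on the same computation as the paper's; the commutation identity $D^{\beta}\bigl(\frac{D^{\gamma}p}{\gamma!}\bigr)=\frac{D^{\gamma}(D^{\beta}p)}{\gamma!}$ you isolate is exactly what the paper uses (implicitly, writing $\frac{D^{\alpha+\beta}p}{\alpha!}$) when it verifies $D^{\beta}(T_1*T_2)=(D^{\beta}T_1)*T_2$. The only structural difference is that the paper first establishes this single-derivative case and then passes to the full operator $\mathcal{F}$ by invoking Corollary~\ref{cor1} on the convergent series $\sum_{\beta}a_{\beta}D^{\beta}T$, whereas you do everything at once by noting that on any fixed test polynomial only the finite range $|\beta|+|\gamma|\le m$ contributes, so no continuity argument is needed.
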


\begin{proof} Let $p\in K[x_1,...,x_n]$, $m={\rm deg} p$
and let $\beta\in \mathbb{N}_0^n$ be an arbitrary multi-index. By the definition of the convolution
$$(D^{\beta}(T_1*T_2),p)=(-1)^{|\beta|}(T_1*T_2,D^{\beta} p)=(-1)^{|\beta |}\sum_{|\alpha|\le m}\left(T_1,\frac{D^{\alpha+\beta}p}{\alpha !}\right)(T_2,y^{\alpha})$$
$$=\sum_{|\alpha |\le m}\left(D^{\beta}T_1,\frac{D^{\alpha}p}{\alpha !}\right)(T_2,y^{\alpha})=((D^{\beta}T_1)*T_2,p).$$
Therefore
\begin{equation}\label{db}
D^{\beta}(T_1*T_2)=(D^{\beta}T_1)*T_2.
\end{equation}
By \eqref{FT}, for every $T\in K[x_1,...,x_n]'$ the series  $\sum\limits_{|\beta|=0}^{\infty}a_{\beta}D^{\beta}T$
converges in the topology of $K[x_1,....,x_n]'$. Therefore, by Corollary \ref{cor1}
and the equality \eqref{db},
\begin{equation}\label{convdif}
({\mathcal F}(T_1*T_2))=\sum_{|\beta|=0}^{\infty}a_{\beta}D^{\beta}(T_1*T_2)=\sum_{|\beta|=0 }^{\infty}a_{\beta}((D^{\beta}T_1)*T_2)=({\mathcal F}T_1)*T_2.\end{equation}
Now, substituting $T_1=\delta$, $T_2=T$ into \eqref{convdif}, we get ${\mathcal F}(T)={\mathcal F}(\delta)*T$. The theorem is proved.
\end{proof}

\section{ The Laplace transform in the module of copolynomials}\label{sec4}

Let $z=(z_1,...,z_n)$ and let  $K\left[\left[z_1,...,z_n,\frac{1}{z_1},...,\frac{1}{z_n}\right]\right]$ be the module of formal Laurent series with coefficients in $K$.
For the multi-index $\alpha=(\alpha_1,...,\alpha_n)\in\mathbb{Z}^n$
we put $z^{\alpha}=z_1^{\alpha_1}z_2^{\alpha_2}\cdots z_n^{\alpha_n}$. For $g\in K\left[\left[z_1,...,z_n,\frac{1}{z_1},...,\frac{1}{z_n}\right]\right]$, $g(z)=\sum\limits_{\alpha\in\mathbb{Z}^n}g_{\alpha}z^{\alpha}$  we naturally  define  the formal residue:
$$Res(g(z))=g_{(-1,...,-1)}.$$

Now we define a Laplace transformation of a copolynomial $T\in K[x_1,...,x_n]'$.

\begin{definition}\rm Let $T\in K[x_1,...,x_n]'$.  Assume that the ring $K$ contains the field of rational numbers  $\mathbb{Q}$. Consider the following formal power series from $K[[z_1,...,z_n]]$:
$$L(T)(z)=\widetilde{T}(z)=
\sum\limits_{|\alpha|=0}^\infty \frac{(T,x^{\alpha})}{\alpha !} z^{\alpha}.$$
A power series  $\widetilde{T}(z)$ will be called the \textit{Laplace transform} of the copolynomial $T$.
\end{definition}
We may write informally as follows:: $\widetilde{T}(z)=(T,e^{<z,x>})$. Obviously the mapping
$L :K[x_1,...,x_n]'\to K[[z_1,...,z_n]]$, $L(T)=\widetilde{T}$ is a continuous isomorphism of  $K$-modules, if we consider
the standart Krull topology on $K[[z_1,...,z_n]]$ \cite[Section 1, \S 3, Section 4]{Gr}, i.e. the topology of coefficient-wise stabilization.

\begin{proposition}\label{prop61} {\rm (the inversion formula or the Parseval identity)}.  Let
 $K\supset \mathbb{Q}$, $T\in K[x_1,...,x_n]'$ and $p(x)=\sum\limits_{|\alpha|\le m}c_{\alpha}x^{\alpha}\in K[x_1,...,x_n]$. Then
$$(T(x),p(x))=Res(\widetilde{T}(z)\widetilde{p}(z)),$$
where
$\widetilde{p}(z)=\sum\limits_{|\alpha|\le m}\frac{\alpha! c_{\alpha}}{z^{\alpha+\iota}}$ is the Laplace transform of the polynomial  $p(x)$.
\end{proposition}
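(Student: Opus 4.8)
The plan is to establish the identity by a direct computation of the formal residue, since both $\widetilde{T}$ and $\widetilde{p}$ are given by explicit formulas. First I would write the two Laplace transforms in full:
$$\widetilde{T}(z)=\sum_{\beta\in\mathbb{N}_0^n}\frac{(T,x^{\beta})}{\beta!}z^{\beta},\qquad \widetilde{p}(z)=\sum_{|\alpha|\le m}\alpha!\,c_{\alpha}\,z^{-\alpha-\iota},$$
where $\iota=(1,\ldots,1)$, so that $z^{\alpha+\iota}=z_1^{\alpha_1+1}\cdots z_n^{\alpha_n+1}$. Here $\widetilde{T}$ is a genuine formal power series (only nonnegative exponents occur), whereas $\widetilde{p}$ is a Laurent polynomial made up of finitely many monomials with strictly negative exponents; hence their product is a well-defined formal Laurent series, each of whose coefficients is a finite sum.

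Next I would form the product and collect its general term:
$$\widetilde{T}(z)\widetilde{p}(z)=\sum_{\beta\in\mathbb{N}_0^n}\sum_{|\alpha|\le m}\frac{(T,x^{\beta})}{\beta!}\,\alpha!\,c_{\alpha}\,z^{\beta-\alpha-\iota}.$$
To read off the residue I extract the coefficient of $z^{-\iota}=z^{(-1,\ldots,-1)}$. A monomial $z^{\beta-\alpha-\iota}$ equals $z^{-\iota}$ exactly when $\beta-\alpha=0$, i.e. $\beta=\alpha$. Since $\widetilde{p}$ has only finitely many terms (those with $|\alpha|\le m$), for each such $\alpha$ there is precisely one matching $\beta$, and the residue is the corresponding finite sum.

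The contribution surviving from each index $\alpha$ is $\frac{(T,x^{\alpha})}{\alpha!}\,\alpha!\,c_{\alpha}=c_{\alpha}(T,x^{\alpha})$, the factorials cancelling. Summing over $|\alpha|\le m$ gives
$$Res\bigl(\widetilde{T}(z)\widetilde{p}(z)\bigr)=\sum_{|\alpha|\le m}c_{\alpha}(T,x^{\alpha}).$$
On the other hand, the $K$-linearity of $T$ yields $(T,p)=\bigl(T,\sum_{|\alpha|\le m}c_{\alpha}x^{\alpha}\bigr)=\sum_{|\alpha|\le m}c_{\alpha}(T,x^{\alpha})$, and the two expressions coincide, which is the assertion.

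I do not expect a genuine obstacle: the only delicate point is the well-definedness of the product as a formal Laurent series together with the finiteness of the residue sum, and both follow from the fact that $\widetilde{p}$ consists of finitely many negative-exponent monomials. The factorials built into the definition of $\widetilde{p}$ are precisely what cancel the $1/\alpha!$ in $\widetilde{T}$, so the pairing collapses to the naive evaluation $(T,p)$; the hypothesis $K\supset\mathbb{Q}$ is needed only to legitimize the divisions by $\alpha!$.
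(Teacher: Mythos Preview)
Your proof is correct and follows essentially the same approach as the paper: a direct computation of the product $\widetilde{T}(z)\widetilde{p}(z)$ and extraction of the coefficient of $z^{-\iota}$. The only cosmetic difference is that the paper first reduces by linearity to the case $p(x)=x^{\beta}$ and then computes, whereas you keep the general polynomial and invoke linearity at the end; the underlying calculation is identical.
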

\begin{proof} It is sufficient to consider the case $p(x)=x^{\beta}$ for some multi-index $\beta\in\mathbb{N}_0^n$. We have
$\widetilde{p}(z)=\frac{\beta! }{z^{\beta+\iota}}$. Therefore
$$\widetilde{T}(z)\widetilde{p}(z)=\sum_{|\alpha|=0}^{\infty}\frac{(T,x^{\alpha})}{\alpha !}z^{\alpha}\frac{\beta !}{z^{\beta+\iota}}$$
and
$Res(\widetilde{T}(z)\widetilde{p}(z))=(T,x^{\beta})$.
\end{proof}

The following theorem asserts that the Laplace transform sends the convolution of copolynomials to the product of their Laplace transforms.

\begin{theorem}\label{th41}
Let $T_1,T_2\in K[x_1,...,x_n]'$.  Assume that the ring $K$ contains the field of rational numbers $\mathbb{Q}$. Then
$$(\widetilde{T_1*T_2})(z)=\widetilde{T_1}(z)\widetilde{T_2}(z).$$
\end{theorem}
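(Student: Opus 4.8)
The plan is to prove the identity coefficient-by-coefficient, exploiting the fact that the Laplace transform $L$ is an isomorphism of $K$-modules and that the formal power series ring $K[[z_1,\dots,z_n]]$ is determined by its coefficients. Since both sides are elements of $K[[z_1,\dots,z_n]]$, it suffices to show that for every multi-index $\gamma\in\mathbb{N}_0^n$ the coefficient of $z^{\gamma}$ agrees on both sides.

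First I would compute the left-hand coefficient. By the definition of the Laplace transform, the coefficient of $z^{\gamma}$ in $(\widetilde{T_1*T_2})(z)$ is $\frac{1}{\gamma!}(T_1*T_2,x^{\gamma})$. I would then apply the convolution formula \eqref{conv} with $p(x)=x^{\gamma}$. Expanding $(x+y)^{\gamma}=\sum_{\alpha\le\gamma}\binom{\gamma}{\alpha}x^{\gamma-\alpha}y^{\alpha}$, so that $\frac{D^{\alpha}x^{\gamma}}{\alpha!}=\binom{\gamma}{\alpha}x^{\gamma-\alpha}$ for $\alpha\le\gamma$ (and vanishes otherwise), formula \eqref{conv} gives
\begin{equation}\label{lhscoef}
(T_1*T_2,x^{\gamma})=\sum_{\alpha\le\gamma}\binom{\gamma}{\alpha}(T_1,x^{\gamma-\alpha})(T_2,y^{\alpha}).
\end{equation}

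Next I would compute the right-hand coefficient. The coefficient of $z^{\gamma}$ in the Cauchy product $\widetilde{T_1}(z)\widetilde{T_2}(z)$ is $\sum_{\alpha+\beta=\gamma}\frac{(T_1,x^{\beta})}{\beta!}\frac{(T_2,y^{\alpha})}{\alpha!}$, where the sum runs over $\alpha,\beta\in\mathbb{N}_0^n$ with $\alpha+\beta=\gamma$. Writing $\beta=\gamma-\alpha$ this becomes $\sum_{\alpha\le\gamma}\frac{(T_1,x^{\gamma-\alpha})(T_2,y^{\alpha})}{(\gamma-\alpha)!\,\alpha!}$. Comparing with \eqref{lhscoef} divided by $\gamma!$, the match reduces to the elementary multinomial identity $\frac{1}{\gamma!}\binom{\gamma}{\alpha}=\frac{1}{(\gamma-\alpha)!\,\alpha!}$, which holds coordinatewise. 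Since $K\supset\mathbb{Q}$, all these factorials are invertible and the computation is legitimate.

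The argument is essentially a bookkeeping verification, so there is no serious conceptual obstacle; the one point requiring mild care is the transition from the Cauchy product of formal power series in several variables to a sum indexed by $\alpha\le\gamma$, i.e.\ correctly matching the multinomial-coefficient structure of the convolution formula \eqref{conv} against the factorial denominators coming from the definition of $\widetilde{T_i}$. I would present the two coefficient computations side by side and conclude by the injectivity of $L$ on coefficients that the two power series coincide, which is exactly the claimed identity.
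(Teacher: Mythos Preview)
Your proof is correct and follows essentially the same approach as the paper: both arguments evaluate $(T_1*T_2,x^{\gamma})$ via the convolution formula \eqref{conv} applied to the monomial $x^{\gamma}$, obtaining the sum $\sum_{\alpha\le\gamma}\binom{\gamma}{\alpha}(T_1,x^{\gamma-\alpha})(T_2,y^{\alpha})$, and then match against the Cauchy product using the identity $\binom{\gamma}{\alpha}/\gamma!=1/((\gamma-\alpha)!\,\alpha!)$. The only difference is presentational---the paper carries the full series along in a single chain of equalities rather than isolating one coefficient---but the underlying computation is identical.
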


\begin{proof} Since for any $\alpha\in\mathbb{N}_0^n$
$$(T_1*T_2,x^{\alpha})=\sum_{|\beta|\le|\alpha|}\left(T_1(x),\frac{D^{\beta}x^{\alpha}}{\beta!}\right)(T_2(y),y^{\beta})$$
$$=\sum_{\beta\le\alpha}\left(T_1(x),\left(\begin{array}{c}\alpha\\ \beta\end{array}\right)
x^{\alpha-\beta}\right)(T_2(y),y^{\beta}),$$
we have
$$(\widetilde{T_1*T_2})(z)=
\sum\limits_{|\alpha|=0}^\infty \frac{(T_1*T_2,x^{\alpha})}{\alpha !} z^{\alpha}$$ $$=\sum_{|\alpha|=0}^{\infty}\sum_{\beta\le\alpha}\left(T_1(x),\frac{\left(\begin{array}{c}\alpha\\ \beta\end{array}\right)}{\alpha!}
x^{\alpha-\beta}\right)(T_2(y),y^{\beta})z^{\alpha}$$
$$=\sum_{|\alpha|=0}^{\infty}\sum_{\beta\le\alpha}\left(T_1(x),\frac{x^{\alpha-\beta}}{(\alpha-\beta)!}\right)
\left(T_2(y),\frac{y^{\beta}}{\beta!}\right)z^{\alpha}=
\widetilde{T_1}(z)\widetilde{T_2}(z).$$\end{proof}

\begin{example}\label{ex42}\rm  Suppose that $n=1$, $K$ is an arbitrary commutative integral domain, $a\in K$ and
$$({\mathcal E}_a,p)=\sum_{k=0}^m  a^k p^{(k)}(0),$$
where $p\in K[x]$, $m={\rm deg} p $. Then ${\mathcal E}_a\in K[x]'$ and
$${\mathcal E}_a=\sum_{j=0}^{\infty }{(-1)}^j a^j \delta^{(j)}.$$
If $K=\mathbb{R}$ and $a>0$, then
$$({\mathcal E}_a,p)=\int\limits_{-\infty}^\infty p(x)f_a(x)dx,$$
where $$f_a(x)=\left\{
{{\begin{array}{*{20}c} \frac{1}{a} e^{-\frac{x}{a}},\quad x > 0
\\
 0,\quad x<0. \\
\end{array} }} \right.$$
(see Example 4 in \cite{GS}).
Let $K$ be again a commutative integral domain such that $K\supset\mathbb{Q}$. If $a,b\in K$, then
$\widetilde{{\mathcal E}_a}(z)=\sum\limits_{j=0}^{\infty}a^jz^j$ and
$(a-b)\widetilde{{\mathcal E}_a}(z)\widetilde{{\mathcal E}_b}(z)=a\widetilde{{\mathcal E}_a}(z)-b\widetilde{{\mathcal E}_b}(z)$. With the help of Theorem \ref{th41} we obtain
$L((a-b)({\mathcal E}_a*{\mathcal E}_b))=\widetilde{a{\mathcal E}_a-b{\mathcal E}_b}$
and
\begin{equation}\label{hilb}
(a-b)({\mathcal E}_a*{\mathcal E}_b)=a{\mathcal E}_a-b{\mathcal E}_b.
\end{equation}
Now the convolution equation \eqref{hilb} can be checked for an arbitrary commutative integral domain $K$ (see also Example 1.1 in \cite{G}, where it was considered the similar equation for formal Laurent series).
Indeed by Theorem \ref{prop1} and Corollary \ref{cor1} we obtain
$$(a-b)({\mathcal E}_a*{\mathcal E_b})=(a-b)\left(\sum_{j=0}^{\infty }{(-1)}^j a^j \delta^{(j)}\right)*\left(\sum_{k=0}^{\infty }{(-1)}^k b^k \delta^{(k)}\right)$$ $$=(a-b)\sum_{j=0}^{\infty}\sum_{k=0}^{\infty}(-1)^{j+k}a^jb^k(\delta^{(j)}*\delta^{(k)})=
(a-b)\sum_{j=0}^{\infty}\sum_{k=0}^{\infty}(-1)^{j+k}a^jb^k\delta^{(j+k)}$$
$$=(a-b)\sum_{j=0}^{\infty}\sum_{l=j}^{\infty}(-1)^{l}a^jb^{l-j}\delta^{(l)}=
(a-b)\sum_{l=0}^{\infty}(-1)^{l}\sum_{j=0}^la^jb^{l-j}\delta^{(l)}$$
$$=\sum_{l=0}^{\infty}(-1)^{l}(a^{l+1}-b^{l+1})\delta^{(l)}=a{\mathcal E}_a-b{\mathcal E}_b.$$
\end{example}

Now we established a connection between Laplace transform of the copolynomial
${\mathcal F}T=\sum\limits_{|\alpha|=0}^{\infty}a_{\alpha}D^{\alpha}T$, where
 $T\in K[x_1,...,x_n]'$, and the symbol $\varphi(z)=\sum\limits_{|\alpha|=0}^{\infty}a_{\alpha}z^{\alpha}$
of the differential operator  ${\mathcal F}$.

\begin{proposition}\label{prop62} Let  $K\supset \mathbb{Q}$ and let ${\mathcal F}=\sum\limits_{|\alpha|=0}^{\infty}a_{\alpha}D^{\alpha}$ be a linear differential operator of infinite order on $K[x_1,...,x_n]'$ with coefficients  $a_{\alpha}\in K$. Then for every $T\in K[x_1,...,x_n]'$ the equality
\begin{equation}\label{dl}
\widetilde{{\mathcal F}T}(z)=\varphi(-z)\widetilde{T}(z)\end{equation}
holds.
\end{proposition}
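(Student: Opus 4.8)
The plan is to reduce the claim to a single computation --- the Laplace transform of one derivative $D^\beta T$ --- and then assemble the result using the continuity of $L$ that was established above.

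First I would compute $\widetilde{D^\beta T}(z)$ for a fixed multi-index $\beta$. Starting from $(D^\beta T, x^\alpha) = (-1)^{|\beta|}(T, D^\beta x^\alpha)$ and using that $D^\beta x^\alpha = \frac{\alpha!}{(\alpha-\beta)!}x^{\alpha-\beta}$ when $\beta \le \alpha$ and vanishes otherwise, I obtain
$$\widetilde{D^\beta T}(z) = \sum_{\alpha \ge \beta} (-1)^{|\beta|}\frac{(T,x^{\alpha-\beta})}{(\alpha-\beta)!}\,z^\alpha.$$
Re-indexing by $\gamma = \alpha - \beta$ factors out $(-1)^{|\beta|}z^\beta$ and leaves exactly $\widetilde{T}(z)$, so that
$$\widetilde{D^\beta T}(z) = (-z)^\beta\,\widetilde{T}(z).$$
This is the algebraic counterpart of the classical rule $\widehat{D^\beta T} = (i\xi)^\beta\,\widehat{T}$.

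Next I would pass $L$ through the infinite summation. In the proof of Theorem \ref{prop1} it was recorded that the series $\sum_\beta a_\beta D^\beta T$ converges in the topology of $K[x_1,\ldots,x_n]'$, and $L$ was shown to be a continuous isomorphism onto $K[[z_1,\ldots,z_n]]$ with its Krull topology. Hence $L$ commutes with this convergent series, and summing the identity from the previous step yields
$$\widetilde{\mathcal{F}T}(z) = \sum_{|\beta|=0}^{\infty} a_\beta\,\widetilde{D^\beta T}(z) = \sum_{|\beta|=0}^{\infty} a_\beta (-z)^\beta\,\widetilde{T}(z) = \varphi(-z)\,\widetilde{T}(z),$$
where the middle series converges coefficientwise since only the finitely many $\beta \le \gamma$ contribute to the coefficient of any fixed monomial $z^\gamma$.

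The only point requiring justification is this interchange of $L$ with the infinite sum; everything else is a finite, formal manipulation. The interchange is harmless here precisely because convergence on both sides is coefficientwise (Krull), so I expect no genuine obstacle. As an alternative that sidesteps the interchange entirely, I could invoke Theorem \ref{prop1} to write $\mathcal{F}T = \mathcal{F}(\delta)*T$, apply Theorem \ref{th41} to get $\widetilde{\mathcal{F}T} = \widetilde{\mathcal{F}\delta}\cdot\widetilde{T}$, and then compute $\widetilde{\mathcal{F}\delta}(z) = \varphi(-z)$ directly from $(\mathcal{F}\delta, x^\alpha) = (-1)^{|\alpha|}\alpha!\,a_\alpha$; this convolution-based route is likely the shortest.
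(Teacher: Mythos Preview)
Your primary argument is essentially the paper's own proof: compute $\widetilde{D^{\beta}T}(z)=(-z)^{\beta}\widetilde{T}(z)$ from the definition and then sum over $\beta$, with the paper's write-up simply saying ``multiplying by $a_{\alpha}$ and summing'' where you are more explicit about the coefficientwise convergence justifying the interchange. Your alternative convolution route via Theorem \ref{prop1} and Theorem \ref{th41} is not used in the paper but is a valid and indeed slightly slicker shortcut.
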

\begin{proof}
By the definition of a Laplace transform for any multi-index $\alpha\in\mathbb{N}_0^n$ we have
$$\widetilde{D^{\alpha}T}(z)=\sum_{|\beta|=0}^\infty \frac{(D^{\alpha}T,x^{\beta})}{\beta !} z^{\beta}=\sum_{\beta\ge \alpha}(-1)^{|\alpha|}\frac{(T,x^{\beta-\alpha})}{(\beta -\alpha)!} z^{\beta}$$ $$=(-z)^{\alpha}\sum_{|\beta|=0}^{\infty} \frac{(T,x^{\beta})}{{\beta }!} z^{\beta}=(-z)^{\alpha}\widetilde{T}(z).$$
Multiplying this equality by $a_{\alpha}$ and summing on all $\alpha\in\mathbb{N}_0^n$, we obtain Equality \eqref{dl}.
\end{proof}

\section{Fundamental solution of a linear differential operator of infinite order}\label{sec6}

Let $T\in K[x_1,...,x_n]'$ be a copolynomial and let ${\mathcal F}=\sum\limits_{|\alpha|=0}^{\infty}a_{\alpha}D^{\alpha}$ be a
linear differential operator of infinite order on $K[x_1,...,x_n]'$ with coefficients $a_{\alpha}\in K$.
Consider the following differential equation
\begin{equation}\label{de}
{\mathcal F}u=T.
\end{equation}
We prove an existence and uniqueness theorem for Equation \eqref{de} and  continuous
dependence for the unique solution of this equation on $T$.
\begin{theorem}\label{th2}
Let $a_0$ be an invertible element of the ring  $K$. Then
the linear differential operator ${\mathcal F}$ of infinite order is bijective and its inverse operator ${\mathcal F}^{-1}$ is a continuous mapping.
Moreover, if $I$ denotes the identity mapping, then
\begin{equation}\label{f1}
{\mathcal F}^{-1}=a_0^{-1}\sum_{k=0}^{\infty}(I-a_0^{-1}{\mathcal F})^k,
\end{equation}
where the series in the right-hand side of \eqref{f1}
converges in the topology of $K[x_1,...,x_n]'$.
In particular,
for any copolynomial $T\in K[x_1,...,x_n]'$ there exists a unique solution
$u\in K[x_1,...,x_n]'$ of Equation \eqref{de}. This solution admits representations
$$u={\mathcal F}^{-1}(T)={\mathcal F}^{-1}(\delta)*T$$
and continuously depends on $T$ in the topology of $K[x_1,...,x_n]'$.
\end{theorem}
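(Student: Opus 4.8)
The plan is to factor the operator as $\mathcal{F}=a_0(I-\mathcal{N})$, where $\mathcal{N}=I-a_0^{-1}\mathcal{F}=-a_0^{-1}\sum_{|\alpha|\ge 1}a_\alpha D^\alpha$ is again a differential operator of infinite order, but one built only from genuine derivatives $D^\alpha$ with $|\alpha|\ge 1$. Since $a_0$ is invertible, inverting $\mathcal{F}$ reduces to inverting $I-\mathcal{N}$, and I would do this via the Neumann series $\sum_{k=0}^{\infty}\mathcal{N}^k$, which after multiplication by $a_0^{-1}$ is exactly the right-hand side of \eqref{f1}.

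The crucial point, and the only place where something really has to be verified, is the convergence of this series in the topology of pointwise convergence. Here I would exploit that $\mathcal{N}$ strictly lowers the degree of the test polynomial. Unwinding \eqref{FT} together with the definition \eqref{deriv} of the derivative and the commutativity $D^{\alpha}D^{\alpha'}=D^{\alpha+\alpha'}$, one checks by induction on $k$ that for $T\in K[x_1,\ldots,x_n]'$ and any polynomial $p$ of degree $m$ the value $(\mathcal{N}^k T,p)$ is a finite $K$-linear combination of values $(T,D^{\gamma}p)$ with $|\gamma|\ge k$, since each of the $k$ factors of $\mathcal{N}$ contributes a derivative of order at least $1$. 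Consequently $(\mathcal{N}^k T,p)=0$ as soon as $k>m=\deg p$, so for each fixed $p$ the partial sums $\sum_{k=0}^{N}\mathcal{N}^k T$ stabilize at $N=\deg p$ and the series converges pointwise; the same bound gives $\mathcal{N}^{N}\to 0$ as $N\to\infty$. This degree-lowering estimate is the \emph{main obstacle}: once it is in place the rest is formal.

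Write $g=a_0^{-1}\sum_{k=0}^{\infty}\mathcal{N}^k$ for the limit. To identify $g$ with $\mathcal{F}^{-1}$ I would use the telescoping identity $(I-\mathcal{N})\sum_{k=0}^{N}\mathcal{N}^k=I-\mathcal{N}^{N+1}$. For one direction, $g\mathcal{F}T=a_0^{-1}\lim_N\sum_{k=0}^{N}\mathcal{N}^k\mathcal{F}T=a_0^{-1}\lim_N a_0(I-\mathcal{N}^{N+1})T=T$; for the other I would pass $\mathcal{F}$ through the limit using its continuity (Lemma \ref{lem}) to get $\mathcal{F}gT=\lim_N(I-\mathcal{N}^{N+1})T=T$, the two computations being consistent because all $D^\alpha$, hence $\mathcal{N}$ and $\mathcal{F}$, commute. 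This proves bijectivity and formula \eqref{f1} at once. Continuity of $\mathcal{F}^{-1}$ then follows either directly (for fixed $p$ only finitely many $\mathcal{N}^k$ act nontrivially and each is continuous), or by collecting coefficients to realize $g=\sum_{\beta}b_\beta D^\beta$ as a single infinite-order operator, noting that for each fixed $\beta$ only the terms with $k\le|\beta|$ contribute so that each $b_\beta\in K$ is well defined, and then invoking Lemma \ref{lem}.

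Finally, $u=\mathcal{F}^{-1}(T)$ and its uniqueness are immediate from bijectivity, while the convolution representation $u=\mathcal{F}^{-1}(\delta)*T$ follows by applying Theorem \ref{prop1} to the infinite-order operator $\mathcal{F}^{-1}$ (equivalently, by writing $\mathcal{N}^k T=(\mathcal{N}^k\delta)*T$ for each finite-order $\mathcal{N}^k$ and summing through the convolution by Corollary \ref{cor1}). Continuous dependence of $u$ on $T$ is then nothing but the continuity of $\mathcal{F}^{-1}$ already established.
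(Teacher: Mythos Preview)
Your proposal is correct and follows essentially the same approach as the paper: both factor $\mathcal{F}=a_0(I-\mathcal{N})$ with $\mathcal{N}$ having no zeroth-order term, invert via the Neumann series, and justify convergence by the degree-lowering property of $\mathcal{N}$ (the paper makes this explicit through the decomposition $\mathcal{N}=\sum_j\partial_{x_j}\mathcal{G}_j$ and the multinomial expansion, whereas you argue it directly by induction on $k$). The convolution representation and continuous dependence are also handled the same way, via Corollary~\ref{cor1} and the continuity of convolution (the paper opts for your parenthetical route through $(\mathcal{N}^k\delta)*T$ and then invokes Theorem~\ref{th21}).
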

\begin{proof}
We have the following representation of the operator ${\mathcal F}$:
\begin{equation}\label{do}
{\mathcal F}=a_0\left(I-\sum_{j=1}^n\frac{\partial}{\partial x_j}{\mathcal G}_j\right),
\end{equation}
 where ${\mathcal G}_j \ (j=1,...,n)$ are some linear differential operators. For every $k\in\mathbb{N}$ we have
$$\left(\sum_{j=1}^n\frac{\partial}{\partial x_j}{\mathcal G}_j\right)^k=\sum_{|\alpha|=k}
\frac{k!}{\alpha!}D^{\alpha}{\mathcal G}_1^{\alpha_1}\cdots {\mathcal G}_n^{\alpha_n}.$$
Now for every copolynomial  $T\in K[x_1,...,x_n]'$ and polynomial  $p\in K[x_1,...,x_n]$ of degree $m$ we have
$$\sum_{k=0}^{\infty}\left(\left(\sum_{j=1}^n\frac{\partial}{\partial x_j}{\mathcal G}_j\right)^kT,p\right)=\left(
\sum_{k=0}^{\infty}\sum_{|\alpha|=k}
\frac{k!}{\alpha!}D^{\alpha}{\mathcal G}_1^{\alpha_1}\cdots {\mathcal G}_n^{\alpha_n}T,p\right)$$
$$
=\sum_{|\alpha |=0}^{\infty}\left(\frac{|\alpha |!}{\alpha!}D^{\alpha}{\mathcal G}_1^{\alpha_1}\cdots {\mathcal G}_n^{\alpha_n}T,p\right)=
\sum_{|\alpha |\le m}(-1)^{|\alpha|}\left(\frac{|\alpha |!}{\alpha!}{\mathcal G}_1^{\alpha_1}\cdots {\mathcal G}_n^{\alpha_n}T,D^{\alpha}p\right).$$
 Therefore the series $\sum\limits_{k=0}^{\infty}\left(\sum\limits_{j=1}^n\frac{\partial}{\partial x_j}{\mathcal G}_j\right)^kT$ converges for any copolynomial  $T\in K[x_1,...,x_n]'$, the operator $I-\sum\limits_{j=1}^n\frac{\partial}{\partial x_j}{\mathcal G}_j$  is bijective and its inverse operator has the form:
$$\left(I-\sum\limits_{j=1}^n\frac{\partial}{\partial x_j}{\mathcal G}_j\right)^{-1}=
\sum\limits_{k=0}^{\infty}\left(\sum\limits_{j=1}^n\frac{\partial}{\partial x_j}{\mathcal G}_j\right)^k.$$
Now  \eqref{do} implies the bijectivity of the operator ${\mathcal F}$ and
\begin{equation}\label{inv}
{\mathcal F}^{-1}=a_0^{-1}\sum\limits_{k=0}^{\infty}\left(\sum\limits_{j=1}^n\frac{\partial}{\partial x_j}{\mathcal G}_j\right)^k=a_0^{-1}\sum_{k=0}^{\infty}(I-a_0^{-1}{\mathcal F})^k.\end{equation}
Therefore the representation   \eqref{f1} is true for the inverse operator ${\mathcal F}^{-1}$.
%В частности, из этого представления получаем, что ${\mathcal F}^{-1}$ -- некоторый дифференциальный оператор бесконечного порядка в модуле $K[x_1,...,x_n]'$.
Hence, the differential equation \eqref{de} has a unique solution
$u\in K[x_1,...,x_n]'$ and, moreover, $u={\mathcal F}^{-1}T$. By the equality \eqref{inv} and Corollary \ref{cor1}
we obtain the following representation for this solution:
$$u={\mathcal F}^{-1}T=a_0^{-1}\sum\limits_{k=0}^{\infty}\left(\sum\limits_{j=1}^n\frac{\partial}{\partial x_j}{\mathcal G}_j\right)^kT$$ $$=a_0^{-1}\sum\limits_{k=0}^{\infty}\left(\sum\limits_{j=1}^n\frac{\partial}{\partial x_j}{\mathcal G}_j\right)^k(\delta*T)=\left(a_0^{-1}\sum\limits_{k=0}^{\infty}\left(\sum\limits_{j=1}^n\frac{\partial}{\partial x_j}{\mathcal G}_j\right)^k\delta\right)*T=
{\mathcal F}^{-1}(\delta)*T$$
(see also Example \ref{ex2}).

Now the continuity of the operator ${\mathcal F}^{-1}$ follows from the continuity of the convolution
(see Theorem \ref{th21}).
The theorem is proved.
\end{proof}
\begin{corollary}\label{cor2} Let $a_0$ be an invertible element of the ring $K$.
Then the differential equation  ${\mathcal F}u=\delta$
has the unique solution
\begin{equation}\label{fund}
{\mathcal E}={\mathcal F}^{-1}\delta.
\end{equation}
\end{corollary}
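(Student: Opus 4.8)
The plan is to obtain Corollary~\ref{cor2} as an immediate specialization of Theorem~\ref{th2}. The substantive work---establishing that $\mathcal{F}$ is a bijection on $K[x_1,\dots,x_n]'$ with continuous inverse, together with the explicit Neumann-type series \eqref{f1} for $\mathcal{F}^{-1}$---has already been carried out in the theorem, so here I would simply invoke that result with the particular right-hand side $T=\delta$.

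First I would observe that the $\delta$-function is itself a copolynomial, $\delta\in K[x_1,\dots,x_n]'$, so it is an admissible right-hand side for Equation~\eqref{de}. Since the hypothesis of Corollary~\ref{cor2} is precisely that $a_0$ is invertible in $K$, the hypothesis of Theorem~\ref{th2} is satisfied. Applying Theorem~\ref{th2} to the case $T=\delta$ then yields at once that the equation $\mathcal{F}u=\delta$ possesses a unique solution $u\in K[x_1,\dots,x_n]'$, and that this solution is given by $u=\mathcal{F}^{-1}(\delta)$. I would then set $\mathcal{E}=\mathcal{F}^{-1}(\delta)$, which is exactly formula~\eqref{fund}, completing the argument.

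There is essentially no obstacle here: the existence and uniqueness, the explicit representation through $\mathcal{F}^{-1}$, and even the convolution form $\mathcal{F}^{-1}(\delta)*T$ for general $T$ are all consequences of the bijectivity and the continuity of $\mathcal{F}^{-1}$ proved above. The only point worth a word of care is that the formula~\eqref{f1} may be specialized to $\mathcal{E}=a_0^{-1}\sum_{k=0}^{\infty}(I-a_0^{-1}\mathcal{F})^k\delta$, giving an explicit series expression for the fundamental solution; this is an optional remark rather than something demanded by the statement. Thus the entire proof reduces to recording that $\delta$ is a legitimate instance of $T$ and quoting Theorem~\ref{th2}.
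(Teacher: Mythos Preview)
Your proposal is correct and matches the paper's approach exactly: the paper states Corollary~\ref{cor2} without a separate proof, since it is the immediate specialization of Theorem~\ref{th2} to the right-hand side $T=\delta$. Your observation that $\delta\in K[x_1,\dots,x_n]'$ and the direct invocation of Theorem~\ref{th2} are precisely what is intended.
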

\begin{definition}\label{def2}
The copolynomial  defined by the formula \eqref{fund}
is called the \textit{fundamental solution of the linear differential operator  ${\mathcal F}$}.
\end{definition}
Theorem  \ref{th2} implies the following assertion.
\begin{corollary}\label{cor52}
Let $a_0$ be an invertible element of the ring $K$.
Then the unique solution of Equation \eqref{de} is the convolution of the fundamental solution ${\mathcal E}$
and the copolynomial $T$: $\ u={\mathcal E}*T$.
\end{corollary}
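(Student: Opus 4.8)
The plan is to read off the result directly from Theorem \ref{th2}, since all the substantive work—bijectivity of ${\mathcal F}$, the Neumann-type series \eqref{f1} for ${\mathcal F}^{-1}$, and the factorization of the solution as a convolution—has already been carried out there. Concretely, I would first invoke Theorem \ref{th2} to assert that Equation \eqref{de} has a unique solution $u \in K[x_1,\dots,x_n]'$ and that this solution admits the representation
$$u = {\mathcal F}^{-1}(T) = {\mathcal F}^{-1}(\delta) * T.$$

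The second step is purely a matter of identification. By Corollary \ref{cor2}, the equation ${\mathcal F}u = \delta$ has the unique solution ${\mathcal E} = {\mathcal F}^{-1}\delta$, and by Definition \ref{def2} this copolynomial ${\mathcal E}$ is precisely what we call the fundamental solution of ${\mathcal F}$. Thus the factor ${\mathcal F}^{-1}(\delta)$ appearing in the representation above is literally ${\mathcal E}$. Substituting this into the formula from the first step yields
$$u = {\mathcal F}^{-1}(\delta) * T = {\mathcal E} * T,$$
which is exactly the claimed identity.

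I do not anticipate any genuine obstacle here: the uniqueness of $u$ comes from the bijectivity of ${\mathcal F}$ established in Theorem \ref{th2}, and the convolution representation is already part of that theorem's conclusion. The only thing to verify is the notational fact that ${\mathcal F}^{-1}\delta$ is the object named ${\mathcal E}$ in \eqref{fund}, so that Corollary \ref{cor52} is really just Theorem \ref{th2} specialized through the definition of the fundamental solution. If one wished to make the argument self-contained rather than citing the convolution representation of Theorem \ref{th2}, the alternative is to apply Theorem \ref{prop1}, which gives ${\mathcal F}(T) = {\mathcal F}(\delta) * T$ for all $T$; combined with the bijectivity of ${\mathcal F}$ and the associativity/commutativity of the convolution (Corollary \ref{cor5}), one checks that ${\mathcal E} * T$ solves \eqref{de} by computing ${\mathcal F}({\mathcal E} * T) = ({\mathcal F}{\mathcal E}) * T = \delta * T = T$, using Example \ref{ex2}. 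Either route is routine and short.
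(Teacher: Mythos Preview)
Your proposal is correct and matches the paper's approach exactly: the paper simply states that the corollary is implied by Theorem \ref{th2}, and your first paragraph spells out precisely that implication together with the identification ${\mathcal E}={\mathcal F}^{-1}\delta$ from \eqref{fund}. The alternative verification via Theorem \ref{prop1} that you sketch at the end is also valid but not needed.
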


\begin{remark}\rm In all previous results it was supposed that $a_0$ is an invertible element of the ring $K$.
We note that this condition is necessary
 for the existence of the fundamental solution of the differential operator ${\mathcal F}$.
Indeed, if  ${\mathcal E}$ is the fundamental solution of this operator,
then applying the left-hand and right-hand sides of Equation ${\mathcal F}{\mathcal E}=\delta$ to 1, we get that
$a_0({\mathcal E},1)=1$, i.e. $a_0$ is an invertible element of the ring $K$.
\end{remark}

\begin{remark}\rm Let $a_0$ be an invertible element of $K$. Then the differential operator ${\mathcal F}: K[x_1,...,x_n]\to K[x_1,...,x_n]$ is bijective and for any polynomial $p\in K[x_1,...,x_n]$ the differential equation
$${\mathcal Fu}=p$$
has a unique solution $u\in K[x_1,...,x_n]$, moreover ${\rm deg }u\le {\rm deg}p$.  Furthermore,  this solution is a convolution of the
fundamental solution ${\mathcal E}$ of the differential operator ${\mathcal F}$ and the polynomial $p$: $\ u={\mathcal E}*p$.
The proof of this assertion is similar to the proof of Theorem \ref{th2}.
\end{remark}

\begin{example}\rm
Let $n=1$ and $a\in K$.
Consider the linear differential operator ${\mathcal F}=a\frac{d}{dx}+I$ on $K[x]'$. By Corollary \ref{cor2}
the operator ${\mathcal F}$ has the fundamental solution
$${\mathcal E}_a={\mathcal F}^{-1}\delta=\left(I+a\frac{d}{dx}\right)^{-1}\delta=\sum\limits_{j=0}^{\infty }{(-1)}^j a^j \delta^{(j)},$$ i.e.
$a\frac{d{\mathcal E}_a}{dx}+{\mathcal E}_a=\delta$ (see also Example \ref{ex42}). If $K=\mathbb{R}$ and $a>0$, then we obtain that the fundamental solution ${\mathcal E}_a$ regarded as a regular copolynomial  coincides with the classical fundamental solution $\frac{1}{a}\ \theta(x)e^{-x/a}$ of the differential operator ${\mathcal F}$, where $\theta (x)$ is the Heaviside function (see also Examples 4 and 5 in \cite{GS}).
\end{example}

\begin{example}\rm  The linear differential operator of infinite order
${\mathcal F}=\sum\limits_{k=0}^{\infty}\left(\sum\limits_{j=1}^n\frac{\partial}{\partial x_j}\right)^k$ has the inverse operator
$I-\sum\limits_{j=1}^n\frac{\partial}{\partial x_j}$. Therefore ${\mathcal E}=\delta-\sum\limits_{j=1}^n\frac{\partial \delta}{\partial x_j}$ is the fundamental solution of the  operator
${\mathcal F}$. By Theorem \ref{th2}
for every copolynomial  $T\in K[x_1,...,x_n]'$
the differential equation of infinite order
$$\sum_{k=0}^{\infty}\left(\sum\limits_{j=1}^n\frac{\partial}{\partial x_j}\right)^ku=T$$
has a unique solution $u={\mathcal F}^{-1}T=T-\sum\limits_{j=1}^n\frac{\partial T}{\partial x_j}$.
\end{example}

\begin{example}\label{ex1}\rm Let $c$ be an invertible element of the ring $K$.
We consider in the module
$K[x_1,x_2,x_3]'$ the Helmholtz equation
\begin{equation}\label{helm}
\triangle {\mathcal E}+c{\mathcal E}=\delta,
\end{equation}  where $\triangle=\frac{\partial^2}{\partial x_1^2}
+\frac{\partial^2}{\partial x_2^2}+\frac{\partial^2}{\partial x_3^2}$ is the Laplace operator.
By  Theorem \ref{th2}, Equation \eqref{helm} has the unique solution (see Formula \eqref{f1}):
\begin{equation}\label{solhelm}
{\mathcal E}=(cI+\triangle)^{-1}\delta=\sum_{k=0}^{\infty}(-1)^kc^{-k-1}\triangle^k\delta.
\end{equation}
For any $\beta=(\beta_1,\beta_2,\beta_3)\in\mathbb{N}_0^3$ and $k\in\mathbb{N}_0$ we have:
$$\triangle^kx^{\beta}=\sum_{|\alpha|=k}\frac{k!}{\alpha!}D^{2\alpha}x^{\beta},\quad x^{\beta}=x_1^{\beta_1}x_2^{\beta_2}x_3^{\beta_3}.$$
Therefore
$$(\triangle^{|\alpha|}\delta,x^{\beta})=(\delta,\triangle^{|\alpha|}x^{\beta})=
\left\{\begin{array}{ccc}\frac{|\alpha|!(2\alpha)!}{\alpha!},\quad \beta=2\alpha,\\
0,\quad  \beta\not=2\alpha.\end{array}\right.$$
Substituting this expression into the formula \eqref{solhelm}, we obtain
$$({\mathcal E},x^{\beta}) = \left\{\begin{array}{ccc}(-1)^{|\alpha|}\frac{|\alpha|!(2\alpha)!}{\alpha!c^{|\alpha|+1}},\quad \beta=2\alpha,\\
0,\quad  \beta\not=2\alpha.\end{array}\right.$$
This formula gives the fundamental solution of the Helmholtz operator $\triangle +cI$. In the case $K=\mathbb{R}$ and $c>0$ this solution
is connected with classical fundamental solutions
$-\frac{1}{4\pi}\frac{e^{\pm i\sqrt{c}|x|}}{|x|},\ (|x|=\sqrt{x_1^2+x_2^2+x_3^2})$
of the Helmholtz operator by equalities
\begin{equation}\label{int}
({\mathcal E},x^{\beta}) = \lim\limits_{b\to+0}
\int\limits_{\mathbb{R}^3}e^{-b|x|}\left(-\frac{\cos(\sqrt{c}|x|)}{4\pi |x| }\right)x^{\beta}dx,\quad \beta\in\mathbb{N}_0^3,\end{equation}
where the integral in the right-hand side \eqref{int} is calculated with the help of the converting to the spherical coordinates.
\end{example}
\begin{example}\label{ex33}\rm Let $a,c\in K$ and let $c$ be an invertible element of the ring $K$. We find the fundamental solution of the linear differential operator
${\mathcal F}=\frac{\partial }{\partial t}-a\frac{\partial^2}{\partial x^2}+cI$.
We have $${\mathcal F}=c\left(I-\left(ac^{-1}\frac{\partial^2}{\partial x^2}-c^{-1}\frac{\partial }{\partial t}\right)\right).$$
Therefore taking into account  \eqref{f1} and \eqref{fund}, we obtain the following expression for the fundamental solution of the operator ${\mathcal F}$:
$${\mathcal E}={\mathcal F}^{-1}\delta = \sum_{k=0}^{\infty}c^{-k-1}\left(a\frac{\partial^2}{\partial x^2}-\frac{\partial }{\partial t}\right)^k\delta$$ $$=\sum_{k=0}^{\infty}c^{-k-1}\sum_{j=0}^k
\left(\begin{array}{c}k\\ j\end{array}\right)(-1)^ja^{k-j}\frac{\partial^{2k-j}\delta}{\partial t^j\partial x^{2k-2j}}.
$$
This implies that for every $s,l\in\mathbb{N}_0$
$$({\mathcal E},x^mt^l)=\left\{\begin{array}{ccc}\frac{(2s)! (l+s)!}{s!}a^sc^{-l-s-1},\quad m=2s, \\
0,\quad m=2s+1.\end{array}\right.$$

This result also can be obtained with the help of the Laplace transform.
Assume that the ring $K$ contains the field of rational numbers $\mathbb{Q}$.
We apply the Laplace transform to both sides of Equation $$\frac{\partial {\mathcal E}}{\partial t}-a\frac{\partial^2{\mathcal E}}{\partial x^2}+c{\mathcal E}=\delta(t,x).
$$ By Proposition \ref{prop62}
we obtain
 $$(c-az_2^2-z_1)\widetilde{{\mathcal E}}(z_1,z_2)=1.$$
Since $c$ is an invertible element of the ring  $K$, the polynomial $c-az_2^2-z_1$ is an invertible element of the ring  $K[[z_1,z_2]]$. Then
 $$\widetilde{{\mathcal E}}(z_1,z_2)=\frac{1}{c-az_2^2-z_1}=\sum_{k=0}^{\infty}c^{-k-1}\sum_{j=0}^k\left(\begin{array}{c}k\\ j\end{array}\right)a^{k-j}z_1^jz_2^{2k-2j}.$$
Let $p(t,x)=x^mt^l$. Then $\widetilde{p}(z_1,z_2)=\frac{m!l!}{z_1^{l+1}z_2^{m+1}}$ and
 $$\widetilde{{\mathcal E}}(z_1,z_2)\widetilde{p}(z_1,z_2)= \sum_{k=0}^{\infty}c^{-k-1}\sum_{j=0}^k\left(\begin{array}{c}k\\ j \end{array}\right)a^{k-j}m!l!
z_1^{j-l-1}z_2^{2k-2j-m-1}.$$
Now by Proposition  \ref{prop61}, we obtain
$$({\mathcal E}(t,x),p(t,x))=Res(\widetilde{{\mathcal E}}(z_1,z_2)\widetilde{p}(z_1,z_2))=\left\{\begin{array}{ccc}\frac{(2s)! (l+s)!}{s!}a^sc^{-l-s-1},\quad m=2s, \\
0,\quad m=2s+1.\end{array}\right.$$

Now let $K=\mathbb{R}$, $a>0$ and $c>0$. Note that
$$\int\limits_0^{\infty }dt\int\limits_{-\infty}^{\infty } t^le^{-ct}x^m \frac{e^{-\frac{x^2}{4at}}}{\sqrt{4\pi at}}dx
=\left\{\begin{array}{ccc}\frac{(2s)! (l+s)!}{s!}a^sc^{-l-s-1},\quad m=2s, \\
0,\quad m=2s+1.\end{array}\right.$$
Therefore in the space $\mathbb{R}[t,x]'$ the fundamental solution of the differential operator ${\mathcal F}$, regarded as a
regular copolynomial, coincide with the function
$\frac{\theta (t)e^{-ct}}{\sqrt{4\pi at}}e^{-\frac{x^2}{4at}}$.
\end{example}
\begin{example}\rm
We find the fundamental solution of the differential operator
${\mathcal F}=\frac{\partial ^2}{\partial x\partial t}+\frac{\partial }{\partial x}-\frac{\partial }{\partial t}-I$.
By Definition \ref{def2} it is a solution of the differential equation
$$
\frac{\partial ^2{\mathcal E}}{\partial x\partial t}+\frac{\partial {\mathcal E}}{\partial x}-\frac{\partial {\mathcal E}}{\partial t}-{\mathcal E}=\delta.$$
Then the sequence  $C_{sl}=({\mathcal E},x^st^l)\ (l,s\in\mathbb{N}_0)$ is a solution of the following problem for the difference equation
$$C_{sl}=slC_{s-1,l-1}-sC_{s-1,l}+lC_{s,l-1},\quad s,l\in\mathbb{N};$$
$$\quad C_{s0}=(-1)^{s+1}s!,\quad C_{0l}=-l!,\quad s,l\in\mathbb{N}_0.$$
This problem has a unique solution
$$C_{sl}=(-1)^{s+1}l!s!,\quad s,l=0,1,2,....$$

We note that
$$-\int_0^{\infty}dt\int_{-\infty}^0e^{-t+x}t^lx^sdx=(-1)^{s+1}l!s!.$$
Therefore, in the space $\mathbb{R}[t,x]'$
the fundamental solution of the differential operator ${\mathcal F}$, regarded as a
regular copolynomial, coincide with the function
$-\theta(t)\theta(-x)e^{x-t}$.
\end{example}

\begin{example}\rm
We find the fundamental solution of the transport operator
${\mathcal F}=\frac{\partial }{\partial t}+\sum\limits_{i=1}^ns_i\frac{\partial}{\partial x_i}+I$, where $s_i\in K$.
Therefore, taking into account formulas \eqref{f1} and \eqref{fund} we obtain the expression for the fundamental solution of the differential operator ${\mathcal F}$:
$${\mathcal E}(t,x)=({\mathcal F}^{-1}\delta) (t,x)= \sum_{k=0}^{\infty}(-1)^k\left(\frac{\partial }{\partial t}+\sum_{i=1}^ns_i\frac{\partial}{\partial x_i}\right)^k\delta (t,x)$$ $$=\sum_{k=0}^{\infty}(-1)^k\sum_{j=0}^k
\left(\begin{array}{c}k\\ j\end{array}\right)\frac{\partial^{j}}{\partial t^j}\left(\sum_{i=1}^ns_i\frac{\partial}{\partial x_i}\right)^{k-j}\delta (t,x)=
$$
$$=\sum_{k=0}^{\infty}(-1)^k\sum_{j=0}^k
\left(\begin{array}{c}k\\ j\end{array}\right)\frac{\partial^{j}}{\partial t^j}\sum_{|\alpha|=k-j}
\frac{|\alpha |!}{\alpha !}s^{\alpha}D^{\alpha}\delta (t,x),\quad s=(s_1,...,s_n).$$
Then, for every  $l\in\mathbb{N}_0$ and $\beta\in\mathbb{N}_0^n$ we have:
$$({\mathcal E},t^lx^{\beta})=\sum_{k=0}^{\infty}(-1)^k\sum_{j=0}^k
\left(\begin{array}{c}k\\ j\end{array}\right)\sum_{|\alpha|=k-j}
\frac{|\alpha |!}{\alpha !}s^{\alpha}(-1)^{|\alpha|+j}\frac{\partial^{j}}{\partial t^j}D^{\alpha}(t^lx^{\beta})|_{t=0,x=0}  $$ $$=
s^{\beta}(|\beta|+l)!.$$
Now let  $K=\mathbb{R}$. Note that
$$\int_0^{\infty}e^{-t}t^l(\delta(x-ts),x^{\beta})dt=s^{\beta}(|\beta|+l)!,\quad l\in \mathbb{N}_0,\ \beta\in\mathbb{N}_0^n,\ s\in\mathbb{R}^n.$$

Thus a connection between the fundamental solution
of the transport operator
and the classical fundamental solution $\theta(t)e^{-t}\delta(x-ts)$ of this operator is established:
$$({\mathcal E},t^lx^{\beta})=\int_0^{\infty}e^{-t}t^l(\delta(x-ts),x^{\beta})dt,\quad l\in \mathbb{N}_0,\beta\in\mathbb{N}_0^n.$$
\end{example}

\begin{example}\rm
Now we consider the  $m$-th order ordinary linear differential  equation in the module $K[x]'$ of copolynomials of one variable
\begin{equation}\label{dem}
\sum_{j=0}^ma_j\frac{d^ju}{dx^j}=T,
\end{equation}
 where $a_j\in K, j=0,...,m$, $a_0\not=0,\ a_m\not=0$ and $T,u\in K[x]'$ are known
and unknown copolynomials of one variable respectively.
This equation is a particular case of Equation \eqref{de}
with the differential operator
${\mathcal F}=\sum\limits_{j=0}^ma_j\frac{d^j}{dx^j}$.
Assume that $a_0$ is an invertible element of the ring $K$.
Then by Theorem  \ref{th2}, Equation \eqref{dem} has a unique solution. This solution has the form
\begin{equation}\label{sol1}
u=a_0^{-1}\sum_{k=0}^{\infty}(I-a_0^{-1}{\mathcal F})^kT.
\end{equation}
Now for every $k\in\mathbb{N}_0$ we have
$$(I-a_0^{-1}{\mathcal F})^k=(-1)^k\frac{d^k}{dx^k}\left(\sum_{j=1}^ma_0^{-1}a_j\frac{d^{j-1}}{dx^{j-1}}\right)^k
$$ $$=(-1)^k\sum_{|\gamma|=k}\frac{k!}{\gamma!}a_0^{-k}a_1^{\gamma_1}\cdot\ldots\cdot a_m^{\gamma_m}
\frac{d^{k+\sum\limits_{j=1}^m(j-1)\gamma_j}}{dx^{k+\sum\limits_{j=1}^m(j-1)\gamma_j}}.$$
Substituting this expression into \eqref{sol1}, we obtain the following representation for the unique solution of Equation \eqref{dem}:
$$
u(x)=\sum_{k=0}^{\infty}\sum_{|\gamma|=k}(-1)^k\frac{k!}{\gamma!}a_0^{-k-1}a_1^{\gamma_1}\cdot\ldots\cdot a_m^{\gamma_m}
T^{\left(k+\sum\limits_{j=1}^m(j-1)\gamma_j\right)}(x).$$
(see \cite[Section 4]{GGG}, where a similar formula has been obtained in the other situation).
In particular, the first order equation $a_1u'(x)+a_0u(x)=T(x)$
has the unique solution
$$u(x)=\sum_{k=0}^{\infty}(-1)^ka_0^{-k-1}a_1^kT^{(k)}(x),$$
and the second order equation $a_2u''(x)+a_1u'(x)+a_0u(x)=T(x)$
has the unique solution
$$u(x)=\sum_{k=0}^{\infty}\sum_{j=0}^k(-1)^k\left(\begin{array}{c}k\\ j\end{array}\right)a_0^{-k-1}a_1^{k-j}a_2^j
T^{(k+j)}(x)$$ $$=\sum_{j=0}^{\infty}\sum_{k=j}^{\infty}(-1)^k\left(\begin{array}{c}k\\ j\end{array}\right)a_0^{-k-1}a_1^{k-j}a_2^j
T^{(k+j)}(x)$$ $$=\sum_{j=0}^{\infty}\sum_{k=0}^{\infty}(-1)^{k+j}\left(\begin{array}{c}k+j\\ j\end{array}\right)a_0^{-j-k-1}a_1^ka_2^j
T^{(k+2j)}(x)$$ $$=\sum_{s=0}^{\infty}\left(\sum_{j=0}^{\left[\frac{s}{2}\right]}
(-1)^{s-j}\left(\begin{array}{c}s-j\\ j\end{array}\right)a_0^{j-s-1}a_1^{s-2j}a_2^j\right)
T^{(s)}(x)$$
(see Formula (4.10) in \cite{GP}).

\end{example}

\section{Fundamental solution of the Cauchy problem for a linear differential equation in the module of copolynomials}\label{sec7}

\subsection{Formal power series over the module of copolynomials.}\label{sub71}

The module of formal power series of the form $u(t,x)=\sum\limits_{k=0}^{\infty}u_k(x)t^k$ with coefficients $u_k(x)\in K[x_1,...,x_n]'$
will be denoted by $K[x_1,...,x_n]'[[t]]$.

The partial derivative with respect to $t$ of the series $u(t,x)\in K[x_1,....,x_n]'[[t]]$
is defined by the formula
$$\frac{\partial u}{\partial t}=\sum_{k=1}^{\infty}ku_k(x)t^{k-1}.$$
The partial derivatives  $D^{\alpha}$ with respect to variables  $x_1,...,x_n$ of the series $u(t,x)\in K[x_1,...,x_n]'[[t]]$ is defined as follows:
$$D^{\alpha}u(t,x)=\sum_{k=0}^{\infty}(D^{\alpha}u_k)(x)t^k.$$
The action of the  $K$-linear operator ${\mathcal A}:K[x_1,....,x_n]'\to K[x_1,....,x_n]'$ on a formal
power series $u(t,x)=\sum\limits_{k=0}^{\infty}u_k(x)t^k\in K[x_1,...,x_n]'[[t]]$ is defined coefficient-wisely:
$$
({\mathcal A}u)(t,x)=\sum_{k=0}^{\infty}({\mathcal A}u_k)(x)t^k.$$
Obviously, that if ${\mathcal A}$ is an invertible $K$-linear operator on the module $K[x_1,....,x_n]'$, then its extension on the module  $K[x_1,...,x_n]'[[t]]$ is also invertible.

We denote by $(u(t,x),p(x))$ the action of  $u(t,x)\in K[x_1,...,x_n]'[[t]]$ on $p(x)\in K[x_1,...,x_n]$, which is defined coefficient-wisely:
$$(u(t,x),p(x))=\sum\limits_{k=0}^{\infty}(u_k(x),p(x))t^k.$$
Thus, $(u(t,x),p(x))\in K[[t]]$.
\begin{definition}\label{def25}\rm
Let
$u(t,x)=\sum\limits_{k=0}^{\infty}u_k(x)t^k\in K[x_1,...,x_n]'[[t]]$.
The \textit{convolution } of a copolynomial $T\in K[x_1,...,x_n]'$ and a formal power series $u(t,x)$ is also defined coefficient-wisely:
$$
(T*u)(t,x)=\sum_{k=0}^{\infty}(T*u_k(x))t^k,$$
Thus, $(T*u)(t,x)\in K[x_1,...,x_n]'[[t]]$.
\end{definition}

\subsection{The Cauchy problem for a linear partial differential equation in the module of copolynomials.}\label{sub72}

Let ${\mathcal F}=\sum\limits_{|\alpha|=0}^{\infty}a_{\alpha}D^{\alpha}$ be a linear
differential equation of infinite order on $K[x_1,...,x_n]'$ with coefficients $a_{\alpha}\in K$.
In the module $K[x_1,...,x_n]'[[t]]$, we consider
the Cauchy problem
\begin{equation}\label{1}
\frac{\partial u(t,x)}{\partial t}=({\mathcal F}u)(t,x),
\end{equation}
\begin{equation}\label{2}
u(0,x)=Q(x)\in K[x_1,...,x_n]'.
\end{equation}
The following example shows that if $a_0$ is invertible, then this Cauchy problem may has no solutions.
\begin{example}\label{ex3}\rm Let $K=\mathbb{Z}$, ${\mathcal F}=I$ and $Q(x)=\delta(x)$. Then the Cauchy problem \eqref{1}, \eqref{2} is written in the form:
\begin{equation}\label{d1}
\frac{\partial u(t,x)}{\partial t}=u(t,x),
\end{equation}
\begin{equation}\label{d2}
u(0,x)=\delta(x).
\end{equation}
Any solution of this problem can be represented in the form of a formal power series
$u(t,x)=\sum\limits_{k=0}^{\infty}u_k(x)t^k$ with coefficients $u_k(x)\in \mathbb{Z}[x_1,...,x_n]'$. Substituting this representation into \eqref{d1}, \eqref{d2}, we get
$$u_0(x)=\delta(x),\quad (k+1)u_{k+1}(x)=u_k(x),\quad k=0,1,2,....$$
This implies $2(u_2,1)=1$, which contradicts the condition $(u_2,1)\in\mathbb{Z}$.
\end{example}
The following theorem shows that in the case $a_0=0$  the Cauchy problem  \eqref{1}, \eqref{2} has a unique solution.
\begin{theorem}\label{th4}
Let $a_0=0$ and let the ring  $K$ be of characteristic 0. Then
for any copolynomial $Q\in K[x_1,...,x_n]'$
the formal power series
\begin{equation}\label{sol}
u(t,x)=\sum_{k=0}^{\infty}\frac{({\mathcal F}^kQ)(x)}{k!}t^k
\end{equation}
 is well defined and is a unique solution of the Cauchy problem \eqref{1}, \eqref{2}. Furthermore, for every $t\in K$ the series \eqref{sol} converges in the topology of the module $K[x_1,...,x_n]'$.
\end{theorem}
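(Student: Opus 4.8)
The plan is to take the series \eqref{sol} as an explicit candidate and to verify in turn that (i) each coefficient $u_k:=\frac{1}{k!}{\mathcal F}^kQ$ is a genuine copolynomial over $K$, (ii) the series converges in $K[x_1,\dots,x_n]'$ for every $t\in K$, (iii) it solves \eqref{1}--\eqref{2}, and (iv) no other solution exists. Throughout I will use that $K$ is a commutative integral domain of characteristic $0$, so that every positive integer is a nonzero-divisor in $K$; this is what lets me cancel factorials in the recursion even though $K$ need not contain $\mathbb Q$.

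The decisive structural fact is that $a_0=0$ forces ${\mathcal F}=\sum_{|\alpha|\ge 1}a_\alpha D^\alpha$ to be degree-lowering: for any $T\in K[x_1,\dots,x_n]'$ the value $({\mathcal F}T,x^\beta)$ is a $K$-combination of the numbers $(T,x^{\beta'})$ with $|\beta'|\le|\beta|-1$, whence $({\mathcal F}^kQ,x^\beta)=0$ as soon as $k>|\beta|$. Consequently, for each fixed monomial $x^\beta$ the sum $\sum_k(u_k,x^\beta)t^k$ is finite and lies in $K$; this yields (ii) and shows that the partial sums of \eqref{sol} stabilise, so that \eqref{sol} is a well-defined element of $K[x_1,\dots,x_n]'[[t]]$ once (i) is known. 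For (i) I would reduce to the case $Q=\delta$: by Theorem \ref{prop1} one has ${\mathcal F}^kQ=({\mathcal F}^k\delta)*Q$, so it suffices to prove that $E_k:=\frac{1}{k!}{\mathcal F}^k\delta$ is a copolynomial over $K$. Then $u_k=E_k*Q$, because the convolution of copolynomials over $K$ is again one, and the identity $k!(E_k*Q)=(k!E_k)*Q=({\mathcal F}^k\delta)*Q={\mathcal F}^kQ$ together with $k!$ being a nonzero-divisor gives $(E_k*Q,x^\beta)=\frac1{k!}({\mathcal F}^kQ,x^\beta)$.

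The heart of the matter is thus the integrality of $E_k$. Writing $\varphi(z)=\sum_{|\alpha|\ge1}a_\alpha z^\alpha$ and expanding ${\mathcal F}^k=\sum_\gamma A^{(k)}_\gamma D^\gamma$ with $A^{(k)}_\gamma=[z^\gamma]\varphi(z)^k$, a direct computation using $(D^\gamma\delta,x^\beta)=(-1)^{|\beta|}\beta!\,[\gamma=\beta]$ gives $({\mathcal F}^k\delta,x^\beta)=(-1)^{|\beta|}\beta!\,A^{(k)}_\beta$. Grouping the ordered compositions of $\beta$ by their underlying multiset of parts yields $A^{(k)}_\beta=\sum_M\frac{k!}{\prod_\alpha m_\alpha!}\prod_\alpha a_\alpha^{m_\alpha}$, the sum running over families of nonzero multi-indices $\alpha$ with multiplicities $m_\alpha$ satisfying $\sum_\alpha m_\alpha=k$ and $\sum_\alpha m_\alpha\alpha=\beta$. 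Hence $(E_k,x^\beta)=(-1)^{|\beta|}\sum_M\frac{\beta!}{\prod_\alpha m_\alpha!}\prod_\alpha a_\alpha^{m_\alpha}$, and everything comes down to the purely combinatorial claim that $\prod_\alpha m_\alpha!$ divides $\beta!=\prod_{j=1}^n\beta_j!$ whenever $\beta=\sum_\alpha m_\alpha\alpha$ with each $\alpha\ne 0$. This divisibility is the main obstacle, and I emphasise that it genuinely needs the hypothesis $\alpha\ne 0$ (equivalently $a_0=0$): Example \ref{ex3} shows the conclusion fails when a constant term is present.

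To settle the divisibility I would order the distinct multi-indices $\alpha^{(1)},\dots,\alpha^{(r)}$ occurring and telescope: with $\beta^{(s)}=\sum_{t\le s}m_t\alpha^{(t)}$ and $\beta^{(0)}=0$ one has $\beta!=\prod_{s=1}^r\left(\prod_{j=1}^n\frac{\beta^{(s)}_j!}{\beta^{(s-1)}_j!}\right)$, so it is enough to check that each inner factor is divisible by $m_s!$. Choosing a coordinate $j_0$ with $\alpha^{(s)}_{j_0}\ge1$ (possible since $\alpha^{(s)}\ne 0$), the $j_0$-th term $\frac{\beta^{(s)}_{j_0}!}{\beta^{(s-1)}_{j_0}!}$ is a product of $m_s\alpha^{(s)}_{j_0}\ge m_s$ consecutive integers, hence divisible by $(m_s\alpha^{(s)}_{j_0})!$ and so by $m_s!$; multiplying the resulting divisors over $s$ gives $\prod_s m_s!\mid\beta!$, establishing (i). With (i)--(ii) in hand, (iii) follows by differentiating \eqref{sol} coefficient-wise: applying ${\mathcal F}$ to the identity $(k-1)!\,u_{k-1}={\mathcal F}^{k-1}Q$ and cancelling the nonzero-divisor $(k-1)!$ yields $k\,u_k={\mathcal F}u_{k-1}$, so that $\frac{\partial u}{\partial t}=\sum_{k\ge1}k\,u_kt^{k-1}=\sum_{l\ge0}({\mathcal F}u_l)t^l={\mathcal F}u$, while $u(0,x)=u_0=Q$. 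Finally, for (iv), any solution $v=\sum_k v_kt^k$ must satisfy $v_0=Q$ and $(k+1)v_{k+1}={\mathcal F}v_k$; since $(k+1)$ is a nonzero-divisor this recursion has at most one solution in $K[x_1,\dots,x_n]'[[t]]$, forcing $v_k=u_k$ for all $k$.
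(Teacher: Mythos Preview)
Your proof is correct, but the key step---showing that $\frac{1}{k!}{\mathcal F}^kQ$ is a genuine copolynomial over $K$---is handled quite differently from the paper. The paper factors ${\mathcal F}=\sum_{j=1}^n\frac{\partial}{\partial x_j}{\mathcal G}_j$ (possible since $a_0=0$), expands ${\mathcal F}^k$ by the multinomial theorem as $k!\sum_{|\alpha|=k}\frac{D^{\alpha}}{\alpha!}{\mathcal G}_1^{\alpha_1}\cdots{\mathcal G}_n^{\alpha_n}$, and then invokes the fact (built into the paper's setup, equation~\eqref{dn}) that the divided-power operators $\frac{D^{\alpha}}{\alpha!}$ are intrinsically well defined on copolynomials over any ring; the $k!$ thus cancels for free. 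You instead expand the symbol $\varphi(z)^k$ explicitly and reduce to the purely arithmetic claim that $\prod_{\alpha}m_{\alpha}!\mid\beta!$ whenever $\beta=\sum_{\alpha}m_{\alpha}\alpha$ with each $\alpha\neq 0$, which you then prove by a telescoping argument on consecutive-integer products. The paper's route is shorter and exploits structure already in place; yours is more elementary and makes the arithmetic role of the hypothesis $a_0=0$ completely explicit (it is exactly what guarantees each $\alpha\neq 0$ in the divisibility statement). A secondary difference: for existence and uniqueness the paper appeals to an external result (Theorem~2.3 of \cite{GP1}), whereas your recursion argument $(k+1)v_{k+1}={\mathcal F}v_k$ combined with $(k+1)$ being a nonzero-divisor is self-contained. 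Your reduction to $Q=\delta$ via Theorem~\ref{prop1} is a clean touch that anticipates the convolution representation in Theorem~\ref{th5}.
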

\begin{proof}
We shows first that $K[x_1,...,x_n]'$ is a torsion-free $\mathbb{Z}$-module \cite[Section VII, \S 2]{burb}. Suppose that an element  $T\in K[x_1,...,x_n]'$  satisfies the equality $kT=0$ for some natural $k$. Then $(kT,p)=k(T,p)=0$ for every polynomial $p\in K[x_1,...,x_n]$.
Since the integral domain $K$ is of characteristic 0, we have
$(T,p)=0$, i.e. $T=0$.

Now we prove that the formal power series \eqref{sol} is well defined and
is a unique solution of the Cauchy problem \eqref{1}, \eqref{2}.
Since $a_0=0$, we obtain the following representation for the operator ${\mathcal F}$:
$${\mathcal F}=\sum_{j=1}^n\frac{\partial}{\partial x_j}{\mathcal G}_j,$$
where ${\mathcal G}_j \ (j=1,...,n)$ are some differential operators.% of infinite order.
Therefore
$${\mathcal F}^k=k!\sum_{|\alpha|=k}
\frac{D^{\alpha}{\mathcal G}_1^{\alpha_1}\cdots {\mathcal G}_n^{\alpha_n}}{\alpha!},\quad k\in\mathbb{N}.$$
Since copolynomials
$\frac{D^{\alpha}{\mathcal G}_1^{\alpha_1}\cdots {\mathcal G}_n^{\alpha_n}Q}{\alpha !} $ are well-defined (see \eqref{dn}), the element ${\mathcal F}^kQ$ in the module $K[x_1,...,x_n]'$ is divided by  $k!$. Since the module  $K[x_1,...,x_n]'$ is a torsion-free  $\mathbb{Z}$-module, we get
\begin{equation}\label{fk}
\frac{{\mathcal F}^kQ}{k!}=\sum_{|\alpha|=k}
\frac{D^{\alpha}{\mathcal G}_1^{\alpha_1}\cdots {\mathcal G}_n^{\alpha_n}Q}{\alpha!},\quad k\in\mathbb{N}.\end{equation}
 By Theorem 2.3 \cite{GP1} the series \eqref{sol} is well-defined, the Cauchy problem  \eqref{1}, \eqref{2} has a unique solution and this solution has the form \eqref{sol}. Now we show that for any $t\in K$ the series \eqref{sol} converges in $K[x_1,...,x_n]'$.
 We
consider the partial sums $u_N(t,x)=\sum\limits_{k=0}^N\frac{({\mathcal F}^kQ)(x)}{k!}t^k$ of this series and show that they are stabilized on every polynomial.
By equalities \eqref{dn} and \eqref{fk} for any $p\in K[x_1,...,x_n]$ we have
$$(u_N(t,x),p(x))=
\sum_{k=0}^N\sum_{|\alpha|=k}
\left(\frac{D^{\alpha}{\mathcal G}_1^{\alpha_1}\cdots {\mathcal G}_n^{\alpha_n}Q}{\alpha!},p\right)t^k$$
$$=\sum_{k=0}^N\sum_{|\alpha|=k}(-1)^{|\alpha|}
\left({\mathcal G}_1^{\alpha_1}\cdots {\mathcal G}_n^{\alpha_n}Q,\frac{D^{\alpha}p}{\alpha !}\right)t^k$$
$$=
\sum_{k=0}^m\sum_{|\alpha|=k}(-1)^{|\alpha|}
\left({\mathcal G}_1^{\alpha_1}\cdots {\mathcal G}_n^{\alpha_n}Q,\frac{D^{\alpha}p}{\alpha !}\right)t^k=
\sum_{k=0}^m\left(\frac{({\mathcal F}^kQ)(x)}{k!},p(x)\right)t^k,\quad N\ge m,$$
where $m={\rm deg }p$.
The theorem is proved.
\end{proof}

\begin{remark}\label{rem3}\rm The condition that $K$ has the characteristic 0 is  essentially for a uniqueness of the solution of Cauchy problem  \eqref{1}, \eqref{2} even for $a_0=0$. Indeed, let $K=\mathbb{Z}/2\mathbb{Z}$. This is a field of characteristic 2. Then the Cauchy problem  \eqref{1}, \eqref{2} for $Q(x)=0$ has a solution $u(t,x)=\sum\limits_{k=0}^{\infty}u_k(x)t^k$, where $u_0(x)=u_1(x)=0$,
$u_{2k}(x)$ is an arbitrary element of $K[x_1,...,x_n]'$ and
$u_{2k+1}(x)=({\mathcal F}u_{2k})(x)$ for any $k\in\mathbb{N}$. Therefore the considered Cauchy problem has nontrivial solutions.
\end{remark}

Example \ref{ex3} shows that the Cauchy problem \eqref{1}, \eqref{2}
may has no solutions when $a_0\not=0$. The following theorem shows
that under an additional restriction on the ring $K$ there exists a solution of this
problem even when $a_0\not=0$.

\begin{theorem}\label{th6}
For any linear differential operator
${\mathcal F}=\sum\limits_{|\alpha|=0}^{\infty}a_{\alpha}D^{\alpha}$
of infinite order
with coefficients $a_{\alpha}\in K$
and for any copolynomial  $Q\in K[x_1,...,x_n]'$ there exists a solution
of the Cauchy problem \eqref{1}, \eqref{2} has a solution
if and only if the ring $K$ contains the field of rational numbers.
Furthermore, a solution
of this Cauchy problem is unique and has the form \eqref{sol}.
\end{theorem}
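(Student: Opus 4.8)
The plan is to prove the equivalence by first reducing the Cauchy problem \eqref{1}, \eqref{2} to its coefficient recursion and then isolating the arithmetic obstruction to solving that recursion in the module. First I would write a prospective solution as $u(t,x)=\sum_{k=0}^{\infty}u_k(x)t^k\in K[x_1,\dots,x_n]'[[t]]$ and compare coefficients of $t^k$ in \eqref{1}. Because $\frac{\partial u}{\partial t}=\sum_{k\ge1}k\,u_k t^{k-1}$ and $({\mathcal F}u)(t,x)=\sum_{k\ge0}({\mathcal F}u_k)t^k$, the equation together with the initial condition \eqref{2} is equivalent to
$$u_0=Q,\qquad (k+1)u_{k+1}={\mathcal F}u_k,\quad k\ge0.$$
Everything then reduces to the question: at each step, can one solve the division problem $(k+1)u_{k+1}={\mathcal F}u_k$ inside $K[x_1,\dots,x_n]'$?

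For the sufficiency, assuming $\mathbb{Q}\subset K$, I would observe that $\frac{1}{k!}\in K$, so $u_k:=\frac{1}{k!}{\mathcal F}^kQ$ is a well-defined copolynomial (the operator ${\mathcal F}$ maps $K[x_1,\dots,x_n]'$ into itself, hence so does ${\mathcal F}^k$, after which one simply multiplies by the scalar $\frac{1}{k!}$). Thus \eqref{sol} is a genuine element of $K[x_1,\dots,x_n]'[[t]]$, and a one-line check using the $K$-linearity of ${\mathcal F}$ gives $u_0=Q$ and $(k+1)u_{k+1}=\frac{1}{k!}{\mathcal F}({\mathcal F}^kQ)={\mathcal F}u_k$, so \eqref{sol} indeed solves the problem. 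Uniqueness I would obtain from the fact, already established in the proof of Theorem \ref{th4}, that when $\operatorname{char}K=0$ the module $K[x_1,\dots,x_n]'$ is torsion-free over $\mathbb{Z}$; since $\mathbb{Q}\subset K$ forces $\operatorname{char}K=0$, the recursion determines $u_{k+1}$ uniquely from $u_k$, so by induction every coefficient equals $\frac{1}{k!}{\mathcal F}^kQ$ and the solution must be \eqref{sol}.

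The necessity carries the real content, and it is the step I expect to demand the most care. Following Example \ref{ex3}, I would test the single simplest instance ${\mathcal F}=I$ (that is $a_0=1$ and $a_\alpha=0$ for $|\alpha|>0$) with $Q=\delta$. If a solution $u=\sum_k u_k t^k$ exists, then pairing the recursion with the constant polynomial $p=1$ and using $(\delta,1)=1$ yields $(u_0,1)=1$ and $(k+1)(u_{k+1},1)=(u_k,1)$, and an immediate induction gives $k!\,(u_k,1)=1$. Hence $k!$ is invertible in $K$ for every $k$; and since $n$ divides $n!$, every positive integer is then invertible in $K$.

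It remains to upgrade ``every positive integer is invertible'' to ``$\mathbb{Q}\subset K$'', which is the delicate final point. If $\operatorname{char}K=p>0$ then $p\cdot1=0$, which cannot be invertible in an integral domain, a contradiction; so $\operatorname{char}K=0$, the canonical homomorphism $\mathbb{Z}\hookrightarrow K$ is injective, and inverting all nonzero integers embeds $\mathbb{Q}$ into $K$. This closes the equivalence. The main obstacle is conceptual rather than computational: recognizing that existence collapses to the solvability of $(k+1)u_{k+1}={\mathcal F}u_k$, and that the single test case ${\mathcal F}=I$, $Q=\delta$ already exposes the full divisibility-by-$k!$ obstruction.
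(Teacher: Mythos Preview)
Your proposal is correct and follows essentially the same route as the paper: reduce \eqref{1}, \eqref{2} to the recursion $(k+1)u_{k+1}={\mathcal F}u_k$, use $\mathbb{Q}\subset K$ for sufficiency (with uniqueness from the torsion-freeness established in Theorem~\ref{th4}), and for necessity pair the recursion with $p=1$ in the test case $Q=\delta$ to force $k!\,(u_k,1)=1$. The only cosmetic differences are that the paper cites an external result (Theorem~2.3 of \cite{GP1}) for the sufficiency step where you argue directly, and the paper sets $a_0=1$ rather than ${\mathcal F}=I$ in the necessity step (which amounts to the same thing once one notes $(D^\alpha T,1)=0$ for $|\alpha|>0$).
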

\begin{proof} \textit{Sufficiency.} Since the ring $K$ contains the field of rational numbers, this ring is of characteristic 0 and $\frac{1}{k!}\in K$ for any $k\in\mathbb{N}$. Arguing as in the proof of Theorem \ref{th4}, we obtain that the module $K[x_1,...,x_n]'$
 is also a torsion-free $\mathbb{Z}$-module and
the element ${\mathcal F}^kQ$ is divided by $k!$ in the module $K[x_1,...,x_n]'$
for any $k\in\mathbb{N}$.
By Theorem 2.3 \cite{GP1}, the series \eqref{sol} is well defined,
the Cauchy problem \eqref{1}, \eqref{2} has a unique solution
and this solution has the form \eqref{sol}.

\textit{Necessity.} Suppose that for any copolynomial $Q\in K[x_1,...,x_n]'$ and for any differential operator ${\mathcal F}=\sum\limits_{|\alpha|=0}^{\infty}a_{\alpha}D^{\alpha}$ of infinite order with coefficients $a_{\alpha}\in K$ there exists a solution $u(t,x)=\sum\limits_{k=0}^{\infty}u_k(x)t^k$
of the Cauchy problem \eqref{1}, \eqref{2}. We put $a_0=1$ and $Q(x)=\delta(x)$. Then coefficients  $u_k(x)$
of the corresponding solution $u(t,x)$ satisfy equalities
$$u_0(x)=\delta(x),\quad (k+1)u_{k+1}(x)=({\mathcal F}u_k)(x),\quad k=0,1,2,....$$
Therefore
$(k+1)(u_{k+1},1)=(u_k,1)$ and $(k+1)!(u_{k+1},1)=k!(u_k,1)=1$ for any $k\in\mathbb{N}_0$.
This implies that elements $k!\in\mathbb{N}$, regarded as elements of $K$, are invertible. Therefore $\frac{1}{k!}\in K,\ k\in\mathbb{N}$.
Then $K$ contains the field of rational numbers.
\end{proof}

Theorems \ref{th4} and \ref{th6} leads to the following assertion.

\begin{corollary}\label{cor3}
Assume that one of the following two conditions is satisfied:
\begin{enumerate}
 \item The ring $K$ is of characteristic 0 and $a_0=0$.
\item The ring $K$ contains the field of rational numbers.
\end{enumerate}
 Then the Cauchy problem
$$\frac{\partial u(t,x)}{\partial t}=({\mathcal F}u)(t,x),\quad
u(0,x)=\delta(x)$$
has a unique solution in the module $K[x_1,...,x_n]'[[t]]$. This solution has the form
\begin{equation}\label{fund1}
{\mathcal E}_C(t,x)=\sum_{k=0}^{\infty}\frac{({\mathcal F}^k\delta)(x)}{k!}t^k.
\end{equation}
\end{corollary}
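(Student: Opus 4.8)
The plan is to recognize that Corollary \ref{cor3} is simply the specialization of the two preceding existence–uniqueness theorems to the particular initial datum $Q=\delta$. Under either of the two hypotheses, the relevant theorem already guarantees that the series \eqref{sol} is well defined and is the unique solution of the Cauchy problem \eqref{1}, \eqref{2}; substituting $Q=\delta$ into \eqref{sol} produces exactly \eqref{fund1}. So the entire argument consists of checking that $\delta$ is an admissible initial condition in each case and reading off the stated closed form.

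Concretely, I would split into the two cases. If condition~(1) holds, so that $K$ has characteristic $0$ and $a_0=0$, then Theorem \ref{th4} applies to an arbitrary copolynomial $Q\in K[x_1,\dots,x_n]'$, in particular to $Q=\delta$. That theorem asserts that \eqref{sol} is well defined, converges in the topology of $K[x_1,\dots,x_n]'$ for every $t\in K$, and is the unique solution of \eqref{1}, \eqref{2}. With $Q=\delta$ the general formula \eqref{sol} becomes
$$
{\mathcal E}_C(t,x)=\sum_{k=0}^{\infty}\frac{({\mathcal F}^k\delta)(x)}{k!}t^k,
$$
which is precisely \eqref{fund1}.

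If instead condition~(2) holds, i.e.\ $K\supset\mathbb{Q}$, then I would invoke the sufficiency direction of Theorem \ref{th6}. Since $K$ contains $\mathbb{Q}$ it is of characteristic $0$ and each $k!$ is invertible in $K$; the theorem then guarantees that for an arbitrary operator ${\mathcal F}$ and an arbitrary initial datum $Q$ the series \eqref{sol} is well defined and is the unique solution of the Cauchy problem. Taking $Q=\delta$ once more reduces \eqref{sol} to \eqref{fund1}.

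I do not expect any genuine obstacle here: the two hypotheses listed in the corollary are exactly the hypotheses of Theorems \ref{th4} and \ref{th6}, and $\delta\in K[x_1,\dots,x_n]'$ is a legitimate copolynomial in both settings. The only point worth stating explicitly is that in each case the uniqueness assertion is inherited verbatim from the cited theorem, so no separate uniqueness argument is needed; the corollary follows by a direct appeal to one of the two theorems according to which hypothesis is in force.
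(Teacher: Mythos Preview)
Your proposal is correct and matches the paper's approach exactly: the paper simply states that the corollary follows from Theorems~\ref{th4} and~\ref{th6}, and your case split (applying Theorem~\ref{th4} under condition~(1) and the sufficiency part of Theorem~\ref{th6} under condition~(2), each with $Q=\delta$) is precisely this specialization.
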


\begin{definition}\label{def31}\rm
 The formal power series ${\mathcal E}_C(t,x)\in K[x_1,...,x_n]'[[t]]$ defined in \eqref{fund1} is
called the \textit{fundamental solution of the Cauchy problem \eqref{1}, \eqref{2}}.
\end{definition}

 Arguing as in the proof of Theorem \ref{th6}, we obtain the following criterion  of existence of a fundamental solution of the Cauchy problem  \eqref{1}, \eqref{2} for any differential operator ${\mathcal F}=\sum\limits_{|\alpha|=0}^{\infty}a_{\alpha}D^{\alpha}$ of infinite order.

\begin{theorem}

There exists a fundamental solution of the Cauchy problem \eqref{1}, \eqref{2} for any linear differential operator
 ${\mathcal F}=\sum\limits_{|\alpha|=0}^{\infty}a_{\alpha}D^{\alpha}$
of infinite order with coefficients $a_{\alpha}\in K$
if and only if the
ring $K$ contains the field of rational numbers.
Moreover, a fundamental solution of this Cauchy problem is  unique and has the form \eqref{fund1}.
\end{theorem}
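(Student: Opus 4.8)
The plan is to notice that the fundamental solution of the Cauchy problem \eqref{1}, \eqref{2} is exactly the solution corresponding to the initial datum $Q = \delta$, so the asserted criterion is nothing but the specialization of Theorem \ref{th6} to that single initial condition. For the sufficiency I would simply invoke Corollary \ref{cor3}: if $K \supset \mathbb{Q}$, then condition (2) of that corollary applies to \emph{every} operator ${\mathcal F} = \sum_{|\alpha|=0}^{\infty} a_\alpha D^\alpha$, and the Cauchy problem with $u(0,x) = \delta(x)$ has a unique solution of the form \eqref{fund1}. Since $1/k! \in K$, each term ${\mathcal F}^k \delta / k!$ is well defined and, arguing as in Theorem \ref{th4}, the resulting series lies in $K[x_1,\dots,x_n]'[[t]]$. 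This one direction therefore delivers existence, uniqueness, and the stated form \eqref{fund1} at once.

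For the necessity I would follow the necessity part of Theorem \ref{th6}, now needing only a single operator. Assume a fundamental solution exists for every ${\mathcal F}$ and apply this to the simplest choice ${\mathcal F} = I$ (that is, $a_0 = 1$ and all other $a_\alpha = 0$). Writing the solution as $u(t,x) = \sum_{k=0}^\infty u_k(x) t^k$ and substituting into \eqref{1}, \eqref{2} gives $u_0 = \delta$ together with the recurrence $(k+1) u_{k+1} = {\mathcal F} u_k = u_k$. Pairing with the constant polynomial $p \equiv 1$ yields $(k+1)(u_{k+1}, 1) = (u_k, 1)$, and an induction starting from $(u_0,1) = (\delta,1) = 1$ produces $k!\,(u_k,1) = 1$ for every $k$. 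Hence each integer $k!$, viewed in $K$, is invertible with inverse $(u_k,1) \in K$.

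It remains to pass from invertibility of all $k!$ to $\mathbb{Q} \subset K$, and this is the only step that is not purely mechanical, though it is the same deduction already used in Theorem \ref{th6}. First, invertibility forces $k! \ne 0$ in $K$, so the characteristic cannot be a prime $p$ (otherwise $p! = 0$); as $K$ is an integral domain, its characteristic is $0$ and $\mathbb{Z}$ embeds in $K$. Second, for each prime $p$ the factorization $p! = p \cdot m$ shows that $p$ divides an invertible element, hence $p$ is itself invertible; since every nonzero integer is a product of primes, all nonzero integers are invertible in $K$, whence $\mathbb{Q} \subset K$. Combining the two directions with the uniqueness and explicit form \eqref{fund1} supplied by Corollary \ref{cor3} closes the argument. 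I expect no real obstacle beyond this bookkeeping, since the whole proof reuses Theorem \ref{th6} almost verbatim with $Q = \delta$.
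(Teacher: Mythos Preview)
Your proposal is correct and follows essentially the same route as the paper, which simply says ``arguing as in the proof of Theorem \ref{th6}'': the sufficiency is exactly Corollary \ref{cor3} under condition (2), and the necessity reuses the necessity argument of Theorem \ref{th6} verbatim, since that argument already specializes to $Q=\delta$ and ${\mathcal F}=I$. Your additional paragraph explaining why invertibility of every $k!$ forces $\mathbb{Q}\subset K$ just spells out a step the paper leaves implicit.
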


 The following assertion shows that, under the assumptions of Corollary \ref{cor3} the unique solution of the Cauchy problem \eqref{1}, \eqref{2} is represented as the convolution of the fundamental solution ${\mathcal E}_C(t,x)$ and the copolynomial $Q$.

\begin{theorem}\label{th5}
Let the assumptions of Corollary \ref{cor3} hold. Then a unique solution of the
Cauchy problem \eqref{1}, \eqref{2} can be represented in the form
$$
u(t,x)={\mathcal E}_C(t,x)*Q.
$$
\end{theorem}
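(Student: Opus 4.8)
The plan is to reduce the claimed identity to a coefficient-wise statement in the variable $t$ and then to transport powers of $\mathcal{F}$ across the convolution by means of Theorem~\ref{prop1}. By Corollary~\ref{cor3} the Cauchy problem \eqref{1}, \eqref{2} has the unique solution
$$u(t,x)=\sum_{k=0}^{\infty}\frac{(\mathcal{F}^kQ)(x)}{k!}t^k,$$
so it suffices to show that $\mathcal{E}_C(t,x)*Q$ has exactly these coefficients. Using the commutativity of the convolution of copolynomials together with the coefficient-wise Definition~\ref{def25}, I would rewrite
$$\mathcal{E}_C(t,x)*Q=\sum_{k=0}^{\infty}\left(\frac{\mathcal{F}^k\delta}{k!}*Q\right)t^k,$$
so that the whole theorem reduces to the single equality $\dfrac{\mathcal{F}^k\delta}{k!}*Q=\dfrac{\mathcal{F}^kQ}{k!}$ for every $k\in\mathbb{N}_0$.

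The key algebraic input is an iterated form of Theorem~\ref{prop1}. Starting from $\mathcal{F}(T)=\mathcal{F}(\delta)*T$ and $\mathcal{F}(T_1*T_2)=(\mathcal{F}T_1)*T_2$, a straightforward induction on $k$ gives $\mathcal{F}^k(T)=(\mathcal{F}^k\delta)*T$ for all $T\in K[x_1,\dots,x_n]'$ and all $k\in\mathbb{N}_0$: the base case uses $\delta*T=T$ (Example~\ref{ex2}), and the inductive step applies Theorem~\ref{prop1} with $T_1=\mathcal{F}^k\delta$ and $T_2=T$. Specializing to $T=Q$ yields $(\mathcal{F}^k\delta)*Q=\mathcal{F}^kQ$.

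The main subtlety, and the step I expect to require the most care, is the division by $k!$, which in case~(1) of Corollary~\ref{cor3} need not come from an actual element $\tfrac{1}{k!}\in K$; there $\tfrac{\mathcal{F}^k\delta}{k!}$ and $\tfrac{\mathcal{F}^kQ}{k!}$ are only defined as the unique elements whose products by $k!$ recover $\mathcal{F}^k\delta$ and $\mathcal{F}^kQ$, as established in the proof of Theorem~\ref{th4} and reused in Theorem~\ref{th6}. To handle this I would work in the torsion-free $\mathbb{Z}$-module $K[x_1,\dots,x_n]'$ and exploit the $\mathbb{Z}$-bilinearity of the convolution:
$$k!\left(\frac{\mathcal{F}^k\delta}{k!}*Q\right)=\left(k!\,\frac{\mathcal{F}^k\delta}{k!}\right)*Q=(\mathcal{F}^k\delta)*Q=\mathcal{F}^kQ=k!\,\frac{\mathcal{F}^kQ}{k!}.$$
Since the module is torsion-free, $k!A=k!B$ forces $A=B$, so $\tfrac{\mathcal{F}^k\delta}{k!}*Q=\tfrac{\mathcal{F}^kQ}{k!}$. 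Inserting this into the coefficient-wise expansion above gives $u(t,x)=\mathcal{E}_C(t,x)*Q$, which completes the proof; everything apart from the division-by-$k!$ bookkeeping is a direct appeal to the earlier results.
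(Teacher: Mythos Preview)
Your proof is correct and follows essentially the same route as the paper: expand ${\mathcal E}_C(t,x)*Q$ coefficient-wise via Definition~\ref{def25}, use Theorem~\ref{prop1} (iterated) together with $\delta*Q=Q$ to get $({\mathcal F}^k\delta)*Q={\mathcal F}^kQ$, and identify the result with the series~\eqref{sol}. Your explicit treatment of the division by $k!$ through $\mathbb{Z}$-bilinearity of the convolution and torsion-freeness is a point the paper passes over silently, but the underlying argument is the same.
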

\begin{proof}
Indeed, a unique solution of the Cauchy problem \eqref{1}, \eqref{2}
is defined by \eqref{sol}. On
the other hand, in view of Definition \ref{def25}
and Theorem \ref{prop1} we have
$${\mathcal E}_C(t,x)*Q=\sum_{k=0}^{\infty}\frac{({\mathcal F}^k\delta) *Q}{k!}t^k=
\sum_{k=0}^{\infty}\frac{{\mathcal F}^k(\delta*Q)}{k!}t^k=
\sum_{k=0}^{\infty}\frac{{\mathcal F}^kQ}{k!}t^k=u(t,x)$$
(see also Example \ref{ex2}).  \end{proof}

\begin{corollary}
Let the assumptions of Theorem \ref{th4} hold. Then for every fixed  $t\in K$
the sum of the series  \eqref{sol} is continuously depends on $Q$ in the topology of the module $K[x_1,...,x_n]'$.
\end{corollary}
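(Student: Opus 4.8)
The plan is to recognize that, for a fixed scalar $t\in K$, the assignment $Q\mapsto\sum_{k=0}^{\infty}\frac{(\mathcal{F}^kQ)(x)}{k!}t^k$ (the sum of the series \eqref{sol}, which is a well-defined copolynomial by Theorem \ref{th4}) is nothing but convolution by a single fixed copolynomial, and then to invoke the continuity of convolution proved in Theorem \ref{th21}. Write $S_t Q$ for this sum. The map $S_t$ is plainly $K$-linear, and since the topology on $K[x_1,\dots,x_n]'$ is metrizable by the metric $d$ introduced earlier, continuity is equivalent to sequential continuity; by linearity it therefore suffices to show that $Q_j\to 0$ implies $S_t Q_j\to 0$.

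The first step is to identify $S_t$ as a convolution operator. Under the hypotheses of Theorem \ref{th4} (characteristic $0$ and $a_0=0$) condition (1) of Corollary \ref{cor3} holds, so Theorem \ref{th5} applies and gives $u(t,x)=\mathcal{E}_C(t,x)*Q$ in $K[x_1,\dots,x_n]'[[t]]$, where $\mathcal{E}_C(t,x)=\sum_{k=0}^{\infty}\frac{(\mathcal{F}^k\delta)(x)}{k!}t^k$. Substituting the fixed value $t\in K$ and setting $\mathcal{E}=S_t\delta=\sum_{k=0}^{\infty}\frac{\mathcal{F}^k\delta}{k!}t^k$ (a copolynomial, convergent by Theorem \ref{th4} applied with $Q=\delta$), I would pull the convolution through the convergent series by Corollary \ref{cor1} and use the term-wise identity $\frac{\mathcal{F}^k\delta}{k!}*Q=\frac{\mathcal{F}^kQ}{k!}$ to conclude $S_t Q=\mathcal{E}*Q$. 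That term-wise identity is exactly the computation inside the proof of Theorem \ref{th5}: by Theorem \ref{prop1} one has $(\mathcal{F}^k\delta)*Q=\mathcal{F}^k(\delta*Q)=\mathcal{F}^kQ$, and the division by $k!$ is legitimate because $K[x_1,\dots,x_n]'$ is a torsion-free $\mathbb{Z}$-module, so both sides are the unique copolynomial whose product with $k!$ equals $\mathcal{F}^kQ$.

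Once $S_t Q=\mathcal{E}*Q=Q*\mathcal{E}$ is established with $\mathcal{E}$ a fixed copolynomial, the conclusion is immediate from Theorem \ref{th21}: convolution by $\mathcal{E}$ maps null sequences to null sequences, so $Q_j\to 0$ forces $\mathcal{E}*Q_j\to 0$, i.e.\ $S_t Q_j\to 0$, whence $S_t$ is continuous. The step requiring the most care is the passage from the formal-power-series identity of Theorem \ref{th5} to the scalar identity $S_t Q=\mathcal{E}*Q$ at a concrete $t$, where one must justify interchanging the (now convergent) summation with the convolution; this is precisely where Corollary \ref{cor1} and the torsion-free argument enter. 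A self-contained alternative that bypasses Theorem \ref{th5} is a direct computation: writing $\mathcal{F}=\sum_{j=1}^n\frac{\partial}{\partial x_j}\mathcal{G}_j$ and $\frac{\mathcal{F}^k}{k!}=\sum_{|\alpha|=k}\frac{D^{\alpha}\mathcal{G}_1^{\alpha_1}\cdots\mathcal{G}_n^{\alpha_n}}{\alpha!}$ as in the proof of Theorem \ref{th4}, one obtains for any polynomial $p$ with $\deg p=m$ the \emph{finite} expansion $(S_tQ,p)=\sum_{|\alpha|\le m}(-1)^{|\alpha|}t^{|\alpha|}\bigl(\mathcal{G}_1^{\alpha_1}\cdots\mathcal{G}_n^{\alpha_n}Q,\frac{D^{\alpha}p}{\alpha!}\bigr)$, since $\frac{D^{\alpha}p}{\alpha!}=0$ for $|\alpha|>m$; as each operator $\mathcal{G}_1^{\alpha_1}\cdots\mathcal{G}_n^{\alpha_n}$ is continuous (Lemma \ref{lem}) and only finitely many multi-indices $\alpha$ occur, taking the maximum of the corresponding finitely many stabilization indices yields $(S_tQ_j,p)=0$ for all large $j$, again giving continuity.
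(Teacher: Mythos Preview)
Your proposal is correct and follows essentially the same approach as the paper: represent the solution for fixed $t\in K$ as the convolution $\mathcal{E}_C(t,x)*Q$ via Theorem~\ref{th5}, then invoke the continuity of convolution (Theorem~\ref{th21}). You supply more justification than the paper does for passing from the formal-series identity of Theorem~\ref{th5} to the evaluated identity at a concrete $t$ (via Corollary~\ref{cor1} and torsion-freeness), and you add a self-contained alternative via the finite expansion from the proof of Theorem~\ref{th4}; but the core argument is identical.
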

\begin{proof}
Indeed, for every $t\in K$
 the fundamental solution ${\mathcal E}_C(t,x)$ is a copolynomial and by Theorem \ref{th5} the sum of the series \eqref{sol} can be represented as a convolution of copolynomials   ${\mathcal E}_C(t,x)$ and $Q(x)$. Now the assertion of the corollary follows from Theorem \ref{th21}.
\end{proof}
\begin{example}\label{ex31}\rm Let the ring $K$ be of characteristic 0 and let $a\in K$. In the module $K[x_1,...,x_n]'[[t]]$ we consider the heat equation
\begin{equation} \label{teplo}
\frac{\partial u(t,x)}{\partial t}=a\triangle u(t,x),\quad \triangle=\sum_{j=1}^n
\frac{\partial ^2}{\partial x_j^2}
\end{equation}
which is a particular case of Equation \eqref{1} with the differential operator
${\mathcal F}=a\triangle $. The assumptions of Theorems \ref{th4}, \ref{th5} and Corollary \ref{cor3} are satisfied.
By Theorem \ref{th4}, for any  $Q\in K[x_1,...,x_n]'$ the Cauchy problem \eqref{teplo}, \eqref{2} has a unique solution and this solution has the form
(see Formula \eqref{sol})
\begin{equation}\label{solt}
u(t,x)=\sum_{k=0}^{\infty}a^{k}\frac{\triangle ^kQ}{k!}t^k.\end{equation}
By Corollary \ref{cor3}, the fundamental solution of this Cauchy problem exists and has the form
\begin{equation}\label{fundt}
{\mathcal E}_C(t,x)=\sum_{k=0}^{\infty}a^k\frac{\triangle ^k\delta}{k!}t^k.
\end{equation}
As in Example \ref{ex1},
for any multi-index  $\beta\in\mathbb{N}_0^n$ and $k\in\mathbb{N}$ we obtain:
$$\frac{\triangle^kx^{\beta}}{k!}=\sum_{|\alpha|=k}\frac{D^{2\alpha}x^{\beta}}{\alpha!}.$$
Therefore
$$\left(\frac{\triangle^{|\alpha|}\delta}{|\alpha |!},x^{\beta}\right)=\left(\delta,\frac{\triangle^{|\alpha|}x^{\beta}}{|\alpha |!}\right)=
\left\{\begin{array}{ccc}\frac{(2\alpha)!}{\alpha!},\quad \beta=2\alpha,\\
0,\quad  \beta\not=2\alpha.\end{array}\right.$$
Substituting this expression into the formula \eqref{fundt}, we obtain the following representation for the fundamental solution of the Cauchy problem
 \eqref{teplo}, \eqref{2}:
\begin{equation}\label{fundteplo}
({\mathcal E}_C(t,x),x^{\beta}) = \left\{\begin{array}{ccc}\frac{(2\alpha)!}{\alpha!}(at)^{|\alpha|},\quad \beta=2\alpha,\\
0,\quad  \beta\not=2\alpha.\end{array}\right.\end{equation}

Now let $K=\mathbb{R}$ and $a>0$.
We show that, in the space  $\mathbb{R}[x_1,...,x_n]'$ for every $t>0$ the sum of the series \eqref{fundt}, regarded as a
regular copolynomial, has the form
$$
\sum_{k=0}^{\infty}a^k\frac{\triangle ^k\delta}{k!}t^k=\frac{1}{(\sqrt{4\pi at})^n}e^{-\frac{|x|^2}{4at}},\quad |x|^2=\sum_{j=1}^nx_j^2.$$
For this purpose we first note that
$$\frac{1}{\sqrt{4\pi at}}\int\limits_{-\infty}^{\infty } y^k e^{-\frac{y^2}{4at}}dy
=\left\{\begin{array}{ccc}\frac{(2l)!}{l!}a^lt^l,\quad k=2l, \\
0,\quad k=2l+1.\end{array}\right.
$$
Now, taking into account \eqref{fundteplo}, we obtain
$$({\mathcal E}_C(t,x),x^{\beta})=\left\{\begin{array}{ccc}\frac{(2\alpha)!}{\alpha!}(at)^{|\alpha|},\quad \beta=2\alpha,\\
0,\quad  \beta\not=2\alpha.\end{array}\right.=\frac{1}{(\sqrt{4\pi at})^n}\int\limits_{\mathbb{R}^n} x^{\beta} e^{-\frac{|x|^2}{4at}}dx.$$
\end{example}
\begin{example}\rm
Assume that the ring $K$ contains the field of rational numbers, $a\in K$ and $Q(x)\in K[x_1,...,x_n]'$. In the module $K[x_1,...,x_n]'[[t]]$ we consider the Cauchy problem for the
inhomogeneous heat equation
\begin{equation} \label{teplo1}
\frac{\partial v(t,x)}{\partial t}=a\triangle v(t,x)+Q(x),
\end{equation}
\begin{equation}\label{ic1}
v(0,x)=0.
\end{equation}
It is easy to see that if $v(t,x)\in K[x_1,...,x_n]'[[t]]$ is a solution of the Cauchy problem \eqref{teplo1}, \eqref{ic1},
then the formal power series
\begin{equation}\label{der}
u(t,x)=\frac{\partial v(t,x)}{\partial t}
\end{equation}
is a solution of the Cauchy problem
\eqref{teplo}, \eqref{2}. The unique solution of the Cauchy problem \eqref{teplo}, \eqref{2} has the form \eqref{solt} (see Example
\ref{ex31}). From \eqref{solt}, \eqref{der} and \eqref{ic1} we uniquely restore
the formal power series $v(t,x)$:
\begin{equation}\label{un}
v(t,x)=\sum_{k=0}^{\infty}a^{k}\frac{\triangle ^kQ}{(k+1)!}t^{k+1}.
\end{equation}
Since
$$\frac{\triangle ^kQ}{(k+1)!}=\sum_{|\alpha|=k}\frac{D^{2\alpha}Q}{\alpha ! (k+1)}\in K[x_1,...,x_n]',\quad k\in\mathbb{N},$$
this series is well defined.  Substituting \eqref{un} into
 \eqref{teplo1} and \eqref{ic1}, we see that the series $v(t,x)$ is a solution of the Cauchy problem \eqref{teplo1}, \eqref{ic1}. The
uniqueness of a solution of the Cauchy problem \eqref{1}, \eqref{ic1} follows from Theorem
\ref{th6}.
\end{example}

\begin{example}\rm Assume that $K$ is of characteristic 0 and $s_1,...,s_n\in K$.
We find the fundamental solution of the Cauchy problem for the transport equation
\begin{equation}\label{trans}
\frac{\partial u}{\partial t}=\sum_{j=1}^ns_j\frac{\partial u}{\partial x_j}.
\end{equation}
Equation \eqref{trans} is a particular case of Equation \eqref{1} with the differential operator ${\mathcal F}=\sum\limits_{j=1}^ns_j\frac{\partial }{\partial x_j}$.
By Corollary \ref{cor3} the fundamental solution of the Cauchy problem \eqref{trans},\eqref{2} has the form
$${\mathcal E}_C(t,x)=\sum_{k=0}^{\infty}\frac{\left(\sum\limits_{j=1}^ns_j\frac{\partial }{\partial x_j}\right)^k\delta}{k!}t^k=\sum_{k=0}^{\infty}t^k\sum_{|\alpha|=k}\frac{s^{\alpha}D^{\alpha}\delta}{\alpha!},
$$
where $s=(s_1,...,s_n)$.
Since for any  $\beta\in \mathbb{N}_0^n$
$$\left(\frac{D^{\alpha}\delta}{\alpha!},x^{\beta}\right)=\left\{\begin{array}{ccc}(-1)^{|\beta |},\quad \beta=\alpha,\\
0,\quad  \beta\not=\alpha.\end{array}\right.,$$
we obtain by virtue the definition of the shift of a copolynomial
$$({\mathcal E}_C(t,x),x^{\beta})=(-1)^{|\beta|}t^{|\beta|}s^{\beta}=(\delta(x+ts),x^{\beta}).$$
Hence the fundamental solution of the Cauchy problem for Equation \eqref{trans} coincides
with the copolynomial $\delta(x+ts)$.
\end{example}

\subsection{Connections between fundamental solutions.}\label{subs73}

Let ${\mathcal F}=\sum\limits_{|\alpha|=0}^{\infty}a_{\alpha}D^{\alpha}$ be a linear
differential operator of infinite order on $K[x_1,...,x_n]'$ with coefficients
$a_{\alpha}\in K$ and let
$a_0$ be an invertible element of the ring  $K$. We assume that the ring $K$ contains the field of rational numbers. By Corollaries \ref{cor2} and \ref{cor3}, the differential operator ${\mathcal F}$ and the Cauchy problem \eqref{1}, \eqref{2} have the fundamental solutions  ${\mathcal E}(x)$ and ${\mathcal E}_C(t,x)$ respectively.
Furthermore, by Theorem \ref{th2} the operator ${\mathcal F}$ is invertible.
 Theorem \ref{th2} and Corollary \ref{cor2} imply that the differential operator $\frac{\partial }{\partial t}-{\mathcal F}:
K[t,x_1,...,x_n]'\to K[t,x_1,...,x_n]'$ is also invertible and this operator also has a fundamental solution which will be denoted by $\tilde{{\mathcal E}}(t,x)$.

At first, we give the connections between fundamental solutions ${\mathcal E}(x)$ and ${\mathcal E}_C(t,x)$. By definitions of the
fundamental solutions of an operator and a Cauchy problem
(see equalities \eqref{fund}) and \eqref{fund1}), we obtain the formula $${\mathcal E}(x)=({\mathcal F}^{-1}{\mathcal E}_C)(0,x),$$
which expresses the fundamental solution of the operator ${\mathcal F}$ through the fundamental solution of the Cauchy problem \eqref{1}, \eqref{2}.
With the help of \eqref{fund} and \eqref{fund1} we obtain the formula
$${\mathcal E}_C(t,x)=\sum_{k=0}^{\infty}\frac{({\mathcal F}^{k+1}{\mathcal E})(x)}{k!}t^k,$$
which expresses the fundamental solution of the Cauchy problem \eqref{1}, \eqref{2} through
the fundamental solution of the operator ${\mathcal F}$.

Now we establishes connections between fundamental solutions $\tilde{{\mathcal E}}(t,x)$
and ${\mathcal E}(x)$. We have
$$\frac{\partial }{\partial t}-{\mathcal F}={\mathcal F}\left({\mathcal F}^{-1}\frac{\partial }{\partial t}-I\right).$$
Then the operator $\left({\mathcal F}^{-1}\frac{\partial }{\partial t}-I\right):K[t,x_1,...,x_n]'\to K[t,x_1,...,x_n]'$ is invertible and the operator equality
\begin{equation}\label{t1}
\left(\frac{\partial }{\partial t}-{\mathcal F}\right)^{-1}=
\left({\mathcal F}^{-1}\frac{\partial }{\partial t}-I\right)^{-1}{\mathcal F}^{-1}
\end{equation}
holds. Applying the equality  \eqref{t1} to the copolynomial $\delta(t,x)=\delta(t)\otimes \delta(x)\in K[t,x_1,...,x_n]'$, we obtain the formulas
$$\tilde{{\mathcal E}}(t,x)=\left({\mathcal F}^{-1}\frac{\partial }{\partial t}-I\right)^{-1}{\mathcal F}^{-1}(\delta(t)\otimes \delta(x))=
\left({\mathcal F}^{-1}\frac{\partial }{\partial t}-I\right)^{-1}(\delta(t)\otimes ({\mathcal F}^{-1}\delta)(x))$$
$$=\left({\mathcal F}^{-1}\frac{\partial }{\partial t}-I\right)^{-1}(\delta(t)\otimes{\mathcal E}(x))=\left({\mathcal F}^{-1}\frac{\partial }{\partial t}-I\right)^{-1}(\delta(t)\otimes({\mathcal F}^{-1}{\mathcal E}_C)(0,x)),$$
which express the fundamental solution of the operator
$\frac{\partial }{\partial t}-{\mathcal F}$ through  either the fundamental solution of the operator ${\mathcal F}$ or the fundamental solution of the Cauchy problem \eqref{1}, \eqref{2}.
Moreover, we obtain the formula
$$\delta(t)\otimes{\mathcal E}(x)=\left({\mathcal F}^{-1}\frac{\partial }{\partial t}-I\right)\tilde{{\mathcal E}}(t,x),$$
which implicitly expresses the fundamental solution of the operator ${\mathcal F}$ through the
fundamental solution of the operator
$\frac{\partial }{\partial t}-{\mathcal F}$.

\subsection*{Acknowledgments.} This work was supported by the Akhiezer Foundation. The authors grateful to Eugene Karolinsky and Sergey Favorov for
useful discussions of the paper results.

\end{document}